\journal{Journal of Computational Physics}
\numberwithin{equation}{section}
\newtheorem{thm}{Theorem}
\numberwithin{thm}{section}
\newtheorem{prop}[thm]{Proposition}
\newtheorem{rem}[thm]{Remark}
\newtheorem{prob}{Problem}
\numberwithin{prob}{section}
\newcommand{\bx}{\mbox{\boldmath$x$}}
\newcommand{\by}{\mbox{\boldmath$y$}}
\newcommand{\bn}{\mbox{\boldmath$n$}}
\newcommand{\bu}{\mbox{\boldmath$u$}}
\newcommand{\bv}{\mbox{\boldmath$v$}}
\newcommand{\ba}{\mbox{\boldmath$a$}}
\newcommand{\be}{\mbox{\boldmath$e$}}
\newcommand{\bh}{\mbox{\boldmath$h$}}
\newcommand{\bd}{\mbox{\boldmath$d$}}
\newcommand{\bb}{\mbox{\boldmath$b$}}
\newcommand{\bj}{\mbox{\boldmath$j$}}
\newcommand{\bM}{\mbox{\boldmath$M$}}
\newcommand{\bW}{\mbox{\boldmath$W$}}
\newcommand{\bH}{\boldsymbol{\mathcal{H}}}
\newcommand{\bJ}{\boldsymbol{\mathcal{J}}}
\newcommand{\bF}{\boldsymbol{\mathcal{F}}}
\newcommand{\bR}{\boldsymbol{\mathcal{R}}}
\newcommand{\ah}{\mbox{$\alpha$}}
\newcommand{\bah}{\mbox{\boldmath$\alpha$}}
\newcommand{\Ltwo}[2][]    {L^2#1(#2)}
\newcommand{\Hone}[2][]    {H^1#1\left(#2\right)}
\newcommand{\Hcurl}[2][]   {\boldsymbol{H}#1(\CurlSymb;#2)}
\newcommand{\GradSymb}{\mathrm{grad}}
\newcommand{\CurlSymb}{\mathrm{curl}}
\newcommand{\DivSymb}{\mathrm{div}}
\newcommand{\Curl}[2][]  {\mathrm{\CurlSymb}{#1}\,{#2}}
\newcommand{\Grad}[2][]  {\mathrm{\GradSymb}{#1}\,{#2}}
\newcommand{\Div}[2][]   {\mathrm{\DivSymb}{#1}\,{#2}}
\newcommand{\isur}[3][]{\left\langle#2,#3\right\rangle#1}
\newcommand{\ivol}[3][]{\left(#2,#3\right)#1}
\begin{document}
\begin{frontmatter}
\title{Waveform Relaxation for the Computational Homogenization of Multiscale Magnetoquasistatic Problems}
\author[temf,gsc]{I. Niyonzima\corref{cor1}}
\author[ace]{C. Geuzaine}
\author[temf,gsc]{S. Sch{\"o}ps}
\cortext[cor1]{Corresponding author.}
\cortext[cor2]{Email addresses: niyonzima@gsc.tu-darmstadt.de (I. Niyonzima), cgeuzaine@ulg.ac.be (C. Geuzaine), schoeps@gsc.tu-darmstadt.de (S. Sch{\"o}ps)}
\address[temf]{Institut f{\"u}r Theorie Elektromagnetischer Felder, Technische Universitaet Darmstadt, \\
Schlossgartenstrasse 8, D-64289 Darmstadt, Germany.}
\address[gsc]{Graduate School of Computational Engineering, Technische Universitaet Darmstadt, \\
Dolivostrasse 15, D-64293 Darmstadt, Germany.}
\address[ace]{Applied and Computational Electromagnetics (ACE), Universit\'{e} de Li\`{e}ge, \\ 
Montefiore Institute, Quartier Polytech 1, All\'{e}e de la d\'{e}couverte 10, B-4000 Li\`{e}ge,  Belgium}

\begin{abstract}
    This paper proposes the application of the waveform relaxation method to the homogenization of multiscale
magnetoquasistatic problems. In the monolithic heterogeneous multiscale method, the nonlinear macroscale problem is solved
using the Newton--Raphson scheme. The resolution of many mesoscale problems per Gau{\ss} point allows to compute the
homogenized constitutive law and its derivative by finite differences. In the proposed approach, the macroscale problem
and the mesoscale problems are weakly coupled and solved separately using the finite element method on time intervals for
several waveform relaxation iterations. The exchange of information between both problems is still carried out using the
heterogeneous multiscale method. However, the partial derivatives can now be evaluated exactly by solving only one
mesoscale problem per Gau{\ss} point.
\end{abstract}

\begin{keyword}
Cosimulation method \sep Eddy currents \sep Finite element method \sep FE$^2$ 
\sep HMM \sep Homogenization \sep Multiscale modeling \sep Nonlinear problems
\sep Magnetoquasistatic problems \sep Waveform relaxation method.
\end{keyword}

\end{frontmatter}

\section{Introduction}
\label{intro}
The recent use of the heterogeneous multiscale method (HMM \cite{e-hmm-03}) in
electrical engineering has allowed to accurately solve magnetoquasistatic (MQS)
problems with multiscale materials, e.g. microstructured composites with
  ferromagnetic inclusions exhibiting hysteretic magnetic behavior
\cite{niyonzima-13,niyonzima-14}. The method requires the solution of one
macroscale and mesoscale problems at each Gau{\ss} point of the
macroscale problem (see Figure \ref{FE2_ppe}) in a coupled formulation based on
the Finite Element (FE) method. In \cite{niyonzima-13,niyonzima-14} the coupled
problem was monolithically time discretized by using equal step sizes at all
scales and the resulting nonlinear problem was solved by an inexact parallel 
multilevel Newton-–Raphson scheme.The finite-difference approach involves the resolution
of 4 mesoscale problems in the three-dimensional case (respectively 3 mesoscale
problems in two-dimensions) for computing the approximated Jacobian
at each Gau{\ss} point.

The use of different time steps becomes important for problems involving
different dynamics at both scales.  In the case of the soft ferrite
  material studied in \cite{bottauscio-13}, for example, it was shown that
capacitive effects occurring at the mesoscale could be accounted for by
upscaling proper homogenized quantities in the macroscopic MQS formulation.
Another relevant case involves perfectly isolated laminations and soft magnetic
composites (SMC) with eddy currents at the mesoscopic level (scales of the
sheet/metalic grain) but without the resulting macroscopic eddy currents. The
application of the HMM to problems involving such materials leads to a
formulation featuring magnetodynamic problems at the mesoscopic scale and a
magnetostatic problem at the macroscopic level. Thus, small time steps should be
used at the mesoscale to resolve the eddy currents (especially with saturated
hysteretic materials) while large time steps could be used to discretize
the rather slowly-varying exciting source current at the macroscale level.
Obviously, in such cases of different dynamics, the use of different time steps
can help to reduce the overall computational cost.
\begin{figure}
\centering
    \includegraphics[width=0.65\textwidth]{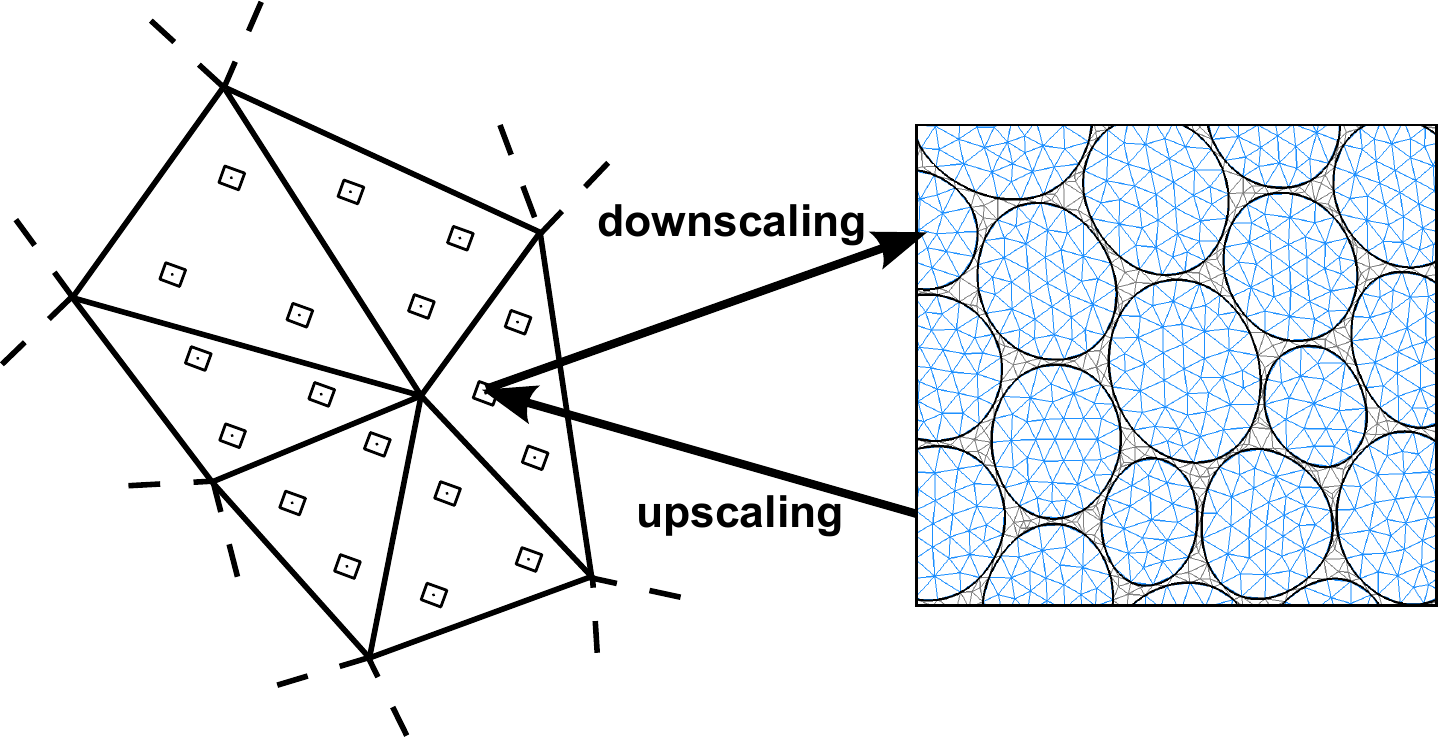}
    \caption{Scale transitions between macroscale (left) and mesoscale (right)
    problems. Downscaling (Macro to meso): obtaining proper boundary conditions and the source terms
    for the mesoscale problem from the macroscale solution. Upscaling
    (meso to Macro): effective quantities for the macroscale problem
    calculated from the mesoscale solution \cite{niyonzima-13,niyonzima-14}.}
  \label{FE2_ppe}
\end{figure}

In this paper we propose a novel approach that provides a natural setting for
the use of different time steps. The approach applies the waveform relaxation
method \cite{white-85,Schops_2010aa} to the homogenization of MQS problems: the
macroscale problem and the mesoscale problems are solved separately on time
intervals and their time-dependent solutions are exchanged in a fixed point
iteration. The decoupling of the macroscale and the mesoscale problems and the independent
resolution of these problems on time intervals has the potential to
significantly reduce both the computation and communication cost of the
multiscale scheme; in particular it allows to compute the Jacobian exactly at
each Gau{\ss} point of the macroscale domain by solving only one mesoscale
problem. As a drawback, waveform relaxation iterations are needed for the
overall problem to converge in addition to the Newton--Raphson iterations on the
meso- and macroscale.  The latter exhibits quadratic convergence, while the
fixed point iteration only leads to a linear convergence but is applied to
waveforms instead of classical vector spaces.  We present both approaches and
compare the computational and the communication costs for both the
monolithic and the waveform relaxation HMM.

The article is organized as follows: in Section \ref{sec:mqs-problem} we
introduce Maxwell's equations and the MQS problem. The weak form of the MQS
problem is then derived using the modified vector potential formulation.
Section \ref{sec:classical-hmm} deals with the multiscale formulations of the
 HMM for the MQS problem 
along the lines of the works \cite{niyonzima-13,niyonzima-14}
with an emphasis on the coupling between the macroscale and the mesoscale problems. 
These formulations are valid 
for the monolithic and the waveform relaxation (WR) HMM. In Section \ref{fe-hmm-mono}
we develop a novel theoretical framework for the monolithic HMM. Using this framework we derive 
a reduced Jacobian from the Jacobian of the full problem using the Schur complement, 
similar as it has been proposed for the Variational Multiscale Method in \cite{bottauscio-13}.
Section \ref{sec:waveform} gives a short overview of the waveform relaxation
method. The notion of weak and strong coupling are explained in the general
context of coupled systems.  The method is then used in Section
\ref{sec:waveform-homogenization} in combination with the HMM and gives
rise to the newly developed WR--HMM.  
Section \ref{sec:computational-cost} is dedicated to the estimation of the 
computational cost for both the monolithic HMM and the WR-HMM. Formulae for the 
computation of costs for the monolithic HMM and the WR-HMM are derived and analyzed 
to give a hint on a possible reduction of the computational cost of both methods.
Section \ref{sec:applications}
deals with an application case. We consider an application involving idealized 
soft magnetic materials (SMC) without global eddy currents. Convergence of
the method as a function of the waveform relaxation iterations and the
macroscale/mesoscale time stepping is numerically investigated.

\section{The magnetoquasistatic problem}
\label{sec:mqs-problem}
In an open, bounded domain
$\Omega=\Omega_{\mathrm{c}} \cup \Omega_{\mathrm{c}}^C \subset \mathbb{R}^3$ (see Figure
\ref{domains}) and $t \in \mathcal{I}=(t_0,t_\mathrm{end}] \subset \mathbb{R}$,
the evolution of electromagnetic fields is governed by the following Maxwell's
equations on $\Omega \times \mathcal{I}$, i.e.,
\begin{equation*}
  \Curl[]{\bh}  = \bj +\partial_t \bd,  \quad \Curl[]{\be}  = -\partial_t \bb , 
  \quad \Div[]{\bd} = \rho,  \quad \Div[]{\bb}  = 0,
  \label{eq:maxwell-equations}
\end{equation*}
and the constitutive laws, e.g. \cite{jackson-em-98}\refstepcounter{equation}\label{eq:const-laws-all}
\begin{equation}
    \bj(\bx, t)=\boldsymbol{\mathcal{J}}\big(\be(\bx, t),\bx\big), 
    \quad \bd(\bx, t)=\boldsymbol{\mathcal{D}}\big(\be(\bx, t),\bx\big), 
    \quad \bh(\bx, t)=\bH\big(\bb(\bx, t),\bx\big).
    \tag{{\theequation} a-c}
    \label{eq:const-laws-1}
\end{equation}\begin{figure}[t]
\centering
\scalebox{0.75}{\begingroup  \makeatletter  \providecommand\color[2][]{    \errmessage{(Inkscape) Color is used for the text in Inkscape, but the package 'color.sty' is not loaded}    \renewcommand\color[2][]{}  }  \providecommand\transparent[1]{    \errmessage{(Inkscape) Transparency is used (non-zero) for the text in Inkscape, but the package 'transparent.sty' is not loaded}    \renewcommand\transparent[1]{}  }  \providecommand\rotatebox[2]{#2}  \ifx\svgwidth\undefined    \setlength{\unitlength}{399.36533896bp}    \ifx\svgscale\undefined      \relax    \else      \setlength{\unitlength}{\unitlength * \real{\svgscale}}    \fi  \else    \setlength{\unitlength}{\svgwidth}  \fi  \global\let\svgwidth\undefined  \global\let\svgscale\undefined  \makeatother  \begin{picture}(1,0.64241587)    \put(0,0){\includegraphics[width=\unitlength,page=1]{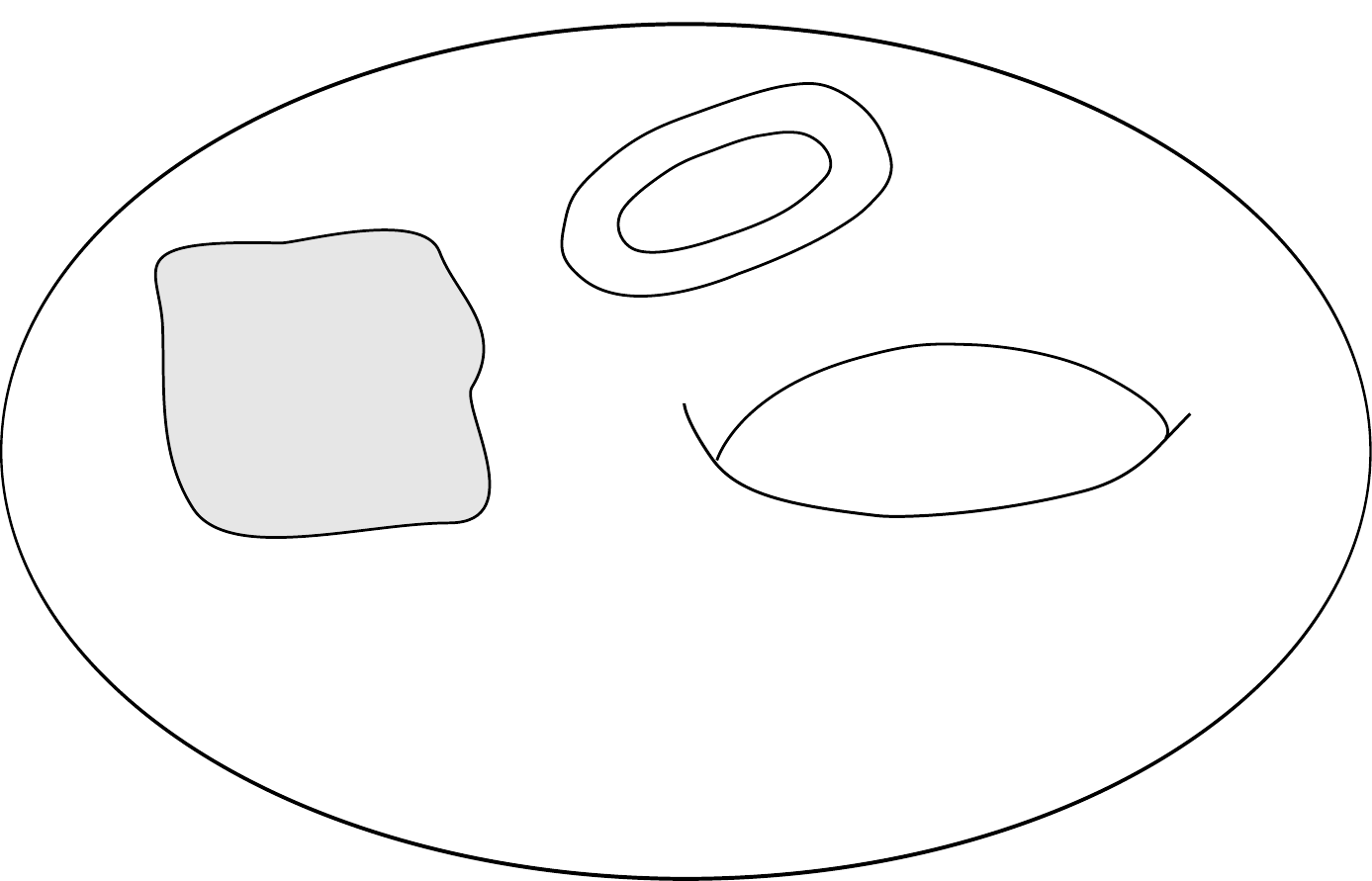}}    \put(1.77356528,2.89992387){\color[rgb]{0,0,0}\makebox(0,0)[lb]{\smash{}}}    \put(0.21020964,0.47840259){\color[rgb]{0,0,0}\makebox(0,0)[lb]{\smash{\textbf{$\Omega_c$}}}}        \put(0.21132869,0.3249628){\color[rgb]{0,0,0}\makebox(0,0)[lb]{\smash{\textbf{$\bj$}}}}    \put(0.41831252,0.3179366){\color[rgb]{0,0,0}\makebox(0,0)[lb]{\smash{\textbf{$\Omega_c^{C}$}}}}    \put(0.53899488,0.55058282){\color[rgb]{0,0,0}\makebox(0,0)[lb]{\smash{\textbf{$\Omega_s$}}}}        \put(0.46232225,0.63365788){\color[rgb]{0,0,0}\makebox(0,0)[lb]{\smash{\textbf{$\Gamma = \Gamma_h \cup \Gamma_e$}}}}    \put(-0.62268608,2.61004806){\color[rgb]{0,0,0}\makebox(0,0)[lt]{\begin{minipage}{4.09618556\unitlength}\raggedright \end{minipage}}}  \end{picture}\endgroup }
\caption{Bounded domain $\Omega$ and its subregions \cite{niyonzima-13,niyonzima-14}. 
The domain $\Omega$ can be split into the conductiong region $\Omega_{\mathrm{c}}$ (with $\sigma > 0$)  
and the non-conducting region $\Omega_{\mathrm{c}}^C = \Omega \backslash \Omega_{\mathrm{c}}$ (with $\sigma = 0$) 
which contains inductors $\Omega_{\mathrm{s}}$ where the current density $\bj_{\mathrm{s}}$ is imposed.
The boundary of the domain $\Gamma$ is such that $\Gamma = \Gamma_{\mathrm{e}} \cup \Gamma_{\mathrm{h}}$
with $\Gamma_{\mathrm{e}} \cap \Gamma_{\mathrm{h}} = \emptyset$. The region $\Gamma_{\mathrm{e}}$ is the part of 
the boundary where the tangential trace of $\be$ (resp. the normal trace of $\bb$) 
is imposed and $\Gamma_{\mathrm{h}}$ is the part of the boundary where the tangential trace 
of $\bh$ (resp. the normal trace of $\bd$ or $\bj$) is imposed. 
}
\label{domains}
\end{figure}

In these equations, $\bh$ is the magnetic field [A/m], $\bb$ the magnetic flux
density [T], $\be$ the electric field [V/m], $\bd$ the electric flux density
[C/m$^2$], $\bj$ the electric current density [A/m$^2$], and $\rho$ the electric
charge density [C/m$^3$]. 
The domain $\Omega_{\mathrm{c}}$ contains conductors whereas the domain $\Omega_{\mathrm{c}}^C$ contains
insulators. Additionally, suitable initial conditions and boundary 
conditions must be imposed for the problem to be well posed.

In this paper we consider only the `magnetoquasistatic' (MQS) case; it is
derived from Maxwell's equations by neglecting the displacement currents with
respect to the eddy currents $\partial_t \bd \ll \bj$. This can be justified if
$L \ll \lambda$ and $\delta \simeq L$ with $L$, the characteristic length of the
system, $\lambda$ the wavelength of the exciting source and $\delta$ the skin
depth. A more rigorous analysis can be found in \cite{schmidt-08}.
The resulting eddy current problem can be defined by the following MQS
approximation of Maxwell's equations \cite{bossavit-cem-98}
\refstepcounter{equation}\label{eq:ampere-faraday-gauss}
\begin{equation}
    \begin{aligned}
        \Curl[]{\bh}  = \bj , \quad \Curl[]{\be}  = -\partial_t \bb, \quad 
        \Div[]{\bb} = 0, 
    \end{aligned}
    \tag{{\theequation} a-c}\label{eq:ampere-faraday-gauss_1}
\end{equation}
and the relevant constitutive laws 
for $\bj$ (see equation (\ref{eq:const-laws-all} a)) 
and $\bh$ (see equation (\ref{eq:const-laws-all} b)).
For the applications treated in this paper the first (electric) constitutive law
will be considered of the form
$\bj(\bx, t)=\sigma(\bx) \, \be(\bx, t) + \bj_{\mathrm{s}}(\bx, t)$, with $\sigma$ the
(anisotropic) electric conductivity and $\bj_{\mathrm{s}}$ an imposed (source) electric
current density in $\Omega_{\mathrm{s}} \subset \Omega_{\mathrm{c}}^C$ [A/m$^2$]. The second (magnetic)
constitutive law can be linear, nonlinear reversible or nonlinear irreversible
(i.e. with hysteresis). Typical nonlinear reversible models include Brauer's model 
\cite{Brauer_1981aa}, Roug\'{e}'s formula \cite{rouge-mag-36} or splines.

Boundary conditions on the tangential component of the magnetic field (or on the
normal component of $\bj$) and on the normal component of the magnetic flux
density (or on the tangential component of $\be$) are imposed on complementary
parts $\Gamma_{\mathrm{h}}$ and $\Gamma_{\mathrm{e}}$ of the boundary $\Gamma = \partial \Omega =
\Gamma_{\mathrm{e}} \cup \Gamma_{\mathrm{h}}$:
\refstepcounter{equation}
\begin{equation}
    \bn \times \bh|_{\Gamma_{\mathrm{h}}} = \bh_t, \quad \bn \cdot \bb|_{\Gamma_{\mathrm{e}}} = b_n.
    \tag{{\theequation} a-b}\label{eq:bcs}
\end{equation}
In this paper, we use the modified vector potential formulation and write
$\bb$ and $\be$ as:
\refstepcounter{equation}\label{eq:vectpot}
\begin{equation}
    \bb = \Curl \ba \, , \quad	 
    \be = -\partial_t \ba
    \tag{{\theequation} a-b}\label{eq:vectpot_1}	
\end{equation}
with $\ba$ the magnetic vector potential [V\,s/m]. Therefore the essential boundary
condition $\bn \cdot \bb|_{\Gamma_\mathrm{e}}=\bn \cdot (\Curl[]{\ba})|_{\Gamma_\mathrm{e}}=b_n$
leads to the cancellation of the normal component of the $\CurlSymb$ and can be
fulfilled by imposing $\bn \times \ba|_{\Gamma_{\mathrm{e}}} = \boldsymbol{0}$.

The MQS problem \eqref{eq:ampere-faraday-gauss_1} together with the constitutive 
laws (\ref{eq:const-laws-all} a) and (\ref{eq:const-laws-all} c) can be solved
using the finite element method. To do this, a Galerkin formulation of the 
problem must be developed. Existence of the (weak) solutions presupposes some 
regularity assumptions on the data of the problem.
The conductivity $\sigma$ is defined such that the mapping $\bJ$ defined 
in (\ref{eq:const-laws-all} a) is monotone, nondecreasing and 
continuous in $\be$. For the linear electric law used in this paper, these conditions 
are fulfilled if $\sigma$ is bounded, i.e., if $\sigma \in L^{\infty}(\Omega)$. 
The mapping $\bH$ is assumed to be maximal monotone which presupposes that 
$\partial \bH/ \partial \bb$ is positive definite. This is the case for linear 
and nonlinear magnetic mappings but does not hold for hysteretic magnetic materials. 
The excitation term $\bj_{\mathrm{s}}$ needs also to be regular enough, e.g., 
$\bj_{\mathrm{s}} \in \Ltwo{(0,T); V(\Omega)^{\ast}}$. The space $V(\Omega) := \Hcurl[_{\mathrm{e}}]{\Omega}$ 
is the appropriate function space for the vector potential with boundary conditions 
on $\Gamma_{\mathrm{e}}$, and the superscript $^{\ast}$ is used
to denote its dual (see \cite{bossavit-cem-98, Bachinger_2005aa}).

Using these assumptions, the weak form of (\ref{eq:ampere-faraday-gauss}\,a) reads
\cite{Bachinger_2005aa, Abdulle_2015ab}: find $\ba \in \Ltwo{(0, T); V(\Omega) }$ 
with $\partial_t \ba \in \Ltwo{(0, T);$ $V(\Omega)^{\ast} }$
such that
\begin{equation}
    \ivol[_{\Omega_{\mathrm{c}}}]{ \sigma \partial_t \ba }{\ba'} 
     + \ivol[_{\Omega}  ]{ \bH(\Curl{\ba}) }{ \Curl{\ba'} } 
    \\
    + \isur[_{\Gamma_{\mathrm{h}}}]{ \bh_t}{\ba'} 
    - \ivol[_{\Omega_{\mathrm{s}}}]{ \bj_{\mathrm{s}} }{\ba'} = 0
  \label{eq:weakform_magdyn}
\end{equation}
holds for all test functions $\ba' \in V_0(\Omega)$, 
where the subscript $_0$ is used to denote homogeneous boundary Dirichlet conditions. 
More regularity in time and space for the solution can be 
obtained by imposing more regularity on the data of the problem \cite{Brezis_2010a}
Round brackets $(\cdot, \cdot)$ are used for volume integrals whereas angle brackets
$\left\langle\cdot, \cdot\right\rangle$ are used for surface integrals.  
The field $\ba$ derived from \eqref{eq:weakform_magdyn} must be gauged on $\Omega_{\mathrm{c}}^C$ 
to ensure its uniqueness. This can mathematically be achieved by factoring the 
space $\Hcurl[_{\mathrm{e}}^0]{\Omega}$ by gradients of scalar potentials, e.g., \cite{Bachinger_2005aa}.

\subsection{Multiscale}
Following \cite{niyonzima-13,niyonzima-14,visintin-08}, we use the subscript
$\varepsilon = l/L$ to denote quantities with rapid fluctuations. 
The length $l$ denotes the length of the periodic cell and the length $L$ denotes 
the characteristic length of the material or the minimum wavelength of the
exciting source current $\bj_{\mathrm{s}}(t)$. This wavelenght is defined as $\lambda_{\mathrm{min}} = c/f_{\mathrm{max}}$ 
where $c$ is the speed of light and $f_{\mathrm{max}}$ is the highest frequency 
obtained when $\bj_{\mathrm{s}}(t)$ is decomposed using the Fourier transform. The homogenized computational domain is assumed to be located far from the boundary 
$\Gamma$ such that the boundary term in \eqref{eq:weakform_magdyn} is independent of
$\varepsilon$.Using this convention  we can define the equivalent multiscale weak form for equation \eqref{eq:weakform_magdyn}.
\begin{prob}[Multiscale finescale problem]
Find $\ba^{\varepsilon} \in \Ltwo{(0, T); V(\Omega) }$ with
$\partial_t \ba^{\varepsilon} \in \Ltwo{(0, T); V(\Omega)^{\ast} }$
such that
\begin{equation}
    \ivol[_{\Omega_{\mathrm{c}}}]{ \sigma^{\varepsilon} \partial_t \ba^{\varepsilon} }{\ba^{\prime \, \varepsilon}} 
    + \ivol[_{\Omega}  ]{ \bH^{\varepsilon}(\Curl{\ba^{\varepsilon}}) }{ \Curl{\ba^{\prime \, \varepsilon}} }\\
    + \isur[_{\Gamma_{\mathrm{h}}}]{ \bh_t }{\ba^{\prime \, \varepsilon}} 
    - \ivol[_{\Omega_{\mathrm{s}}}]{ \bj_{\mathrm{s}} }{\ba^{\prime \, \varepsilon}} = 0
  \label{eq:weakform_magdyn_multiscale}
\end{equation}
holds for all test functions $\ba^{\prime \, \varepsilon} \in V_0(\Omega)$. 
\label{eq:multiscale_problem}
\end{prob}
\noindent This `finescale' weak form is used as the reference solution for
problems involving multiscale materials. The conductivity
$\sigma^{\varepsilon}$ and the material mapping $\bH^{\varepsilon}$ are 
defined by
\begin{equation}
  \sigma^{\varepsilon}(\bx) = \sigma\left(\frac{\bx}{\varepsilon}\right)  \hspace{5mm} \mathrm{and} \hspace{5mm} \bH^{\varepsilon}(\bb^{\varepsilon}(\bx, t), \bx) = \overline{\bH}(\bb^{\varepsilon}(\bx, t), \bx, \bx/\varepsilon)
\label{eq:multiscale_ml}
\end{equation} 
for all $\bx= (x_1, x_2, x_3)$ in $\Omega_H$, where $\Omega_H$ is the multiscale
computational domain.  The mapping $\overline{\bH}$ is used to represent
two-scale composite materials for which the characteristic length at the
mesoscale is $\varepsilon$ \cite{visintin-08}. By abuse of notation, we use
$\bH$ instead of $\overline{\bH}$ in the rest of the text.  In
\eqref{eq:multiscale_ml}, slow variations of the material law are accounted for
by the term $\bx$ while the fast fluctuations are accounted for by the term
$\bx/\varepsilon$ for $\varepsilon \ll 1$ (see
\cite{bensoussan-ahm-11,visintin-tsh-06,visintin-08}).

As an illustration, consider a two-dimensional linear magnetic material law $\bH(\bb(\bx, t), \bx) = \mu(\bx) \, \bb(\bx, t)$ 
with the magnetic permeability defined as 
\begin{equation*}
  \mu(\bx) = 
  \begin{cases} 
  \mu_1 & \quad \mathrm{if } (x_1  \mod \hspace{2mm} T) \leq \lambda,
  \\ 
  \mu_2 & \quad \mathrm{if } (x_1 \mod \hspace{2mm} T) > \lambda,
  \end{cases} 
  \label{eq:permeability}
\end{equation*}
for all $\bx \in \Omega_H$, positive $\mu_1, \mu_2$ and $\lambda < T$. 
The magnetic permeability $\mu$ is periodic with period $T$ and is 
representative of a stack of laminations made of two materials.
The division by the parameter $\varepsilon$ allows to make the period smaller 
(from $T$ to $\varepsilon \, T$). 
In the previous case, the permeability becomes: 
\begin{equation*}
  \mu(\bx/\varepsilon) = 
  \begin{cases} 
  \mu_1 & \quad \mathrm{if } (\bx  \mod \hspace{2mm} \varepsilon T) \leq \lambda,
  \\ 
  \mu_2 & \quad \mathrm{if } (\bx \mod \hspace{2mm} \varepsilon T) > \lambda,
  \end{cases} 
\end{equation*}
and is $\varepsilon$-periodic. Therefore the material law 
$\bH^{\varepsilon}(\bb^{\varepsilon}(\bx, t), \bx, \bx/{\varepsilon}) 
= \mu(\bx/{\varepsilon}) \, \bb^{\varepsilon}(\bx, t)$ is rapidly fluctuating for 
$\varepsilon \ll 1$.

In the following sections, the indices $\mathrm{M}, \mathrm{m}$ and $\mathrm{c}$
are used for denoting the macroscale, the mesoscale and correction terms, respectively.
The variables $\bx \in \Omega$ and $\by= (y_1, y_2, y_3)\in \Omega_{\mathrm{m}}$ are the
macroscale and the mesoscale coordinates and the mesoscale coordinates are only 
defined on the cell domain with the origin at the barycenter. 

\section{The heterogeneous multiscale method}
\label{sec:classical-hmm}
{Developments of this section are derived along the lines of ideas in 
\cite{niyonzima-13,niyonzima-14}.
The resolution of the finescale reference Problem \ref{eq:multiscale_problem} 
is computationally expensive for small values of $\varepsilon$. Hence, multiscale 
methods as HMM \cite{e-hmm-03} are necessary to reduce the computational costs 
and eventually make realistic simulations feasible.

For the MQS problems, HMM is based on the scale separation assumption
($\varepsilon \ll 1$) and was already illustrated in Figure \ref{FE2_ppe}.  When
HMM is applied, the finescale problem is replaced by one macroscale
problem defined on a coarse mesh covering the entire domain and accounting for
the slow variations of the finescale solution, and by many mesoscale problems 
defined on small, finely meshed areas around some points of interest of the
macroscale mesh (e.g. numerical quadrature points), and are used for computing
missing information.  The transfer of information between these problems is done
during the \textit{upscaling} and the \textit{downscaling} stages.

Equations governing the macroscale and the mesoscale are derived from the 
finescale problem using the asymptotic homogenization theories. The macroscale
problem is derived from the finescale problem using the classical weak convergence 
theory whereas the mesoscale problem is derived using the two-scale convergence 
\cite{visintin-08}. For these convergence theories to be applied, the solution of 
the finescale Problem \ref{eq:multiscale_problem} must exist and belong to appropriate 
function spaces (e.g.: reflexive or separable Banach spaces). This imposes some 
regularity conditions on $\sigma^{\varepsilon}$, $\bH^{\varepsilon}$ and on the 
excitation $\bj_{\mathrm{s}}$ which have been stated in Section \ref{sec:mqs-problem}.
However, the HMM has been numerically used for 
hysteretic magnetic laws that do not fulfill the monotonicity assumptions on 
$\bH^{\varepsilon}$ \cite{niyonzima-14}.

\subsection{The macroscale problem}
\noindent The macroscale weak form of the problem in $\ba$-formulation can be derived from 
equation \eqref{eq:weakform_magdyn} as follows~\cite{niyonzima-14}:
\begin{prob}[Macroscale weak problem]
Find $\ba_\mathrm{M} \in \Ltwo{(0, T); V(\Omega) }$ with 
$\partial_t \ba_\mathrm{M} \in \Ltwo{(0, T); V(\Omega)^{\ast} }$
such that

\begin{equation}
    \Big( \sigma_\mathrm{M} \partial_t \ba_\mathrm{M}, \ba_\mathrm{M}^{\prime}\Big)_{\Omega_\mathrm{c}} +
    \Big( \bH_\mathrm{M}(\bb_\mathrm{M}, \bx),\Curl[_\mathrm{x}]{\ba_\mathrm{M}^{\prime}} \Big)_{\Omega}\\ 
    + \Big< \bn \times \bh_\mathrm{M}, \ba_\mathrm{M}^{\prime} \Big>_{\Gamma_{\mathrm{h}}} 
    - \Big( \bj_\mathrm{s}, \ba_\mathrm{M}^{\prime} \Big)_{\Omega_\mathrm{s}} = 0
  \label{eq:classical_macro}
\end{equation}
holds for all 
$\ba_\mathrm{M}^{\prime} \in V_0(\Omega)$, 
$\bb_\mathrm{M} := \Curl[_{\mathrm{x}}]\ba_\mathrm{M}$
and $t \in \mathcal{I}=(t_0,t_\mathrm{end}]$.
\label{eq:classical_macroscale_problem}
\end{prob}
\noindent 
Thanks to the linearity of the electric law, the macroscopic conductivity
$\sigma_\mathrm{M}$ is obtained using the asymptotic expansion method \cite{bensoussan-ahm-11}
\begin{equation}
    \left(\sigma_\mathrm{M} \right)_{i\,j} = \frac{1}{|\Omega_{\mathrm{m}}|} \int_{\Omega_{\mathrm{m}}} \left( \sigma_{i\,j} 
    - \sum_{k} \sigma_{i\,k} \frac{\partial \chi^{j}(\by)}{\partial y_k} \right) \mathrm{d}y,
  \label{eq:Homogenized-Law-FE-HMM-Sigma}
\end{equation}
where $\chi^{j}$ is obtained by solving the cell problem: 
find $\chi^{j} \in V_G$ such that
\begin{equation}
    \Big( (\Grad[_y]{\psi})^T , \sigma (\Grad[_y]{\chi^{j}} - \be_j) \Big)_{\Omega_{\mathrm{m}}} = 0, 
    \quad \forall \psi \in V_G.
      \label{eq:CellProblem-Sigma}
\end{equation}
The space $V_G$ is the space $\Hone[]{\Omega_{\mathrm{m}}}$ with 
periodic boundary conditions while $\be_j$ is the unit vector in the 
$j^{\mathrm{th}}$ spatial direction.
\noindent The field $\bb_{\mathrm{c}} = \Curl[_\mathrm{y}]{\ba_{\mathrm{c}}}$
is the magnetic correction field obtained by solving the mesoscale 
problem corresponding to points $\bx \in \Omega$. The mesoscale fields depend
on the associated macroscale field and vice versa due to coupling of scales, i.e.,
\begin{equation}
  \ba_{\mathrm{c}}(\bx, \by,t) = \boldsymbol{\mathcal{A}}_{\mathrm{c}}\big(\by, \ba_{\mathrm{M}}(\bx,t)\big)
  \label{eq:bc-fe-hmm}
\end{equation}
where $\boldsymbol{\mathcal{A}}_\mathrm{c}$ denotes the solution operator 
of the mesoscale problem to be described in the following section \ref{fe-hmm-meso}. 
It is defined in analogy to the corrector operator defined  
in \cite{Henning_2015a} for nonlinear scalar elliptic problems.
The macroscopic magnetic law $\bH_\mathrm{M}$ in \eqref{eq:classical_macro} 
is computed at each point $(\bx, t) \in \Omega \times \mathcal{I}$ using both scales according to the averaging
formula from the two-scale convergence theory \cite{visintin-08}
\begin{equation}
\bH_\mathrm{M}(\bb_{\mathrm{M}}(\bx, t), \bx) 
:= \frac{1}{|\Omega_{\mathrm{m}}|} \int_{\Omega_{\mathrm{m}}} \bH\big(\bb_{\mathrm{c}}(\bx, \by, t) 
+ \bb_{\mathrm{M}}(\bx, t), \bx, \by\big)\;\mathrm{d} \by \quad \forall (\bx, t) \in \Omega \times \mathcal{I}
\label{eq:Homogenized-Law-FE-HMM}
\end{equation}
where $\bb_\mathrm{M} := \Curl[_{\mathrm{x}}]\ba_\mathrm{M}$ and 
$\bb_{\mathrm{c}} = \Curl[_\mathrm{y}]{\boldsymbol{\mathcal{A}}_{\mathrm{c}}(\by, \ba_{\mathrm{M}})}$. 
The underlying constitutive law $\bH$ is known for the heterogeneous phases at the
mesoscale level. 

\subsection{Mesoscale problems}
\label{fe-hmm-meso}
\noindent The following weak form of the mesoscale is defined from equation 
\eqref{eq:weakform_magdyn}  \cite{visintin-08,niyonzima-14}
\begin{prob}[Mesoscale weak problem]
Find the mesoscale correction $\ba_\mathrm{c} \in \Ltwo{(0, T); V_{\mathrm{per}}(\Omega_m) }$ with 
$\partial_t \ba_\mathrm{c} \in \Ltwo{(0, T); V_{\mathrm{per}}(\Omega_m)^{\ast} }$
using periodic boundary conditions such that 
\begin{equation}
  \Big(\sigma \partial_{t} \ba_{\mathrm{c}}, \ba_\mathrm{c}^{\prime}\Big)_{\Omega_\mathrm{mc}}
  + \Big( \bH(\bb_{\mathrm{c}} 
  + \bb_\mathrm{M}, \bx, \by), \Curl[_\mathrm{y}]{\ba_\mathrm{c}^{\prime}} \Big)_{ \Omega_\mathrm{m}} 
  \\
  - \Big( \sigma \be_\mathrm{M}, \ba_\mathrm{c}^{\prime} \Big)_{\Omega_\mathrm{mc}} = 0,
  \label{eq:classical_meso}
\end{equation}
for all $\ba_\mathrm{c}^{\prime} \in V_{\mathrm{per}}(\Omega_m)$,
magnetic correction field $\bb_{\mathrm{c}} = \Curl[_\mathrm{y}]{\ba_{\mathrm{c}}}$, 
and periodic boundary conditions and $\bb_{\mathrm{M}}$ given. 
The subscript $_{\mathrm{per}}$ is used to denote the use of periodic boundary
conditions. 
\label{eq:classical_mesoscale_problem}
\end{prob}
\noindent The macroscale magnetic and electric fields are defined as
\begin{align*}
  \bb_\mathrm{M}(\bx, t)&:=\Curl[_{\mathrm{x}}]\ba_\mathrm{M}(\bx,t)\\
  \be_\mathrm{M}(\bx, t)&:= -\partial_t\ba_\mathrm{M}(\bx,t)-\kappa (\partial_t\bb_\mathrm{M} \times \by)^{T}
\end{align*}
with $\kappa = 1$ for two-dimensional problems and $\kappa = 1/2$ for three-dimensional problems.
Existence and uniqueness of the mesoscale correction $\ba_{\mathrm{c}}$ motivates the introduction of 
the solution operator in \eqref{eq:bc-fe-hmm}. It can be formally deduced based on standard theory for nonlinear 
elliptic-parabolic problems, e.g. \cite{Bachinger_2005aa,barbu1976nonlinear,brezis1973operateurs,browder1970existence}. 

\subsection{Space and time discretization}
\label{fe-hmm-discrete}
Macroscale and mesoscale equations are solved using the finite element method. The 
first step consists in discretizing the computational domain into elements. 
The fields $\ba_{\mathrm{M}}^{H}$ and $\ba_{\mathrm{c}}^{h}$ are approximation 
of the continuous fields $\ba_{\mathrm{M}}$ and $\ba_{\mathrm{c}}$ on the 
discretized computational domain and  
$\ba_{\mathrm{M}}^{H} \in (0,T] \times \bW_{H,0}^\mathrm{M}$ and 
$\ba_{\mathrm{c}}^{h} \in (0,T] \times \bW_{h,0}^\mathrm{m}$ where
$\bW_{H,0}^\mathrm{M}$ and $\bW_{h,0}^\mathrm{m}$ are $H$(curl)-conforming edge 
finite elements.
In this paper we consider only lowest order $H$(curl)-conforming edge finite elements as test and basis functions, i.e., 
\begin{equation}
    \begin{aligned}
    & \bW_{H,0}^\mathrm{M} := \Big\{\bv \in \Hcurl[]{\Omega} \, \big| \, \bv \in 
    \mathcal{N}_0^{I}(K_M) \, \forall K_M \in \mathcal{T}_{H}^\mathrm{M} \Big\},
    \\
    & \bW_{h,0}^\mathrm{m} := \Big\{\bv \in \Hcurl[]{\Omega_\mathrm{m}} \, \big| \, \bv \in 
    \mathcal{N}_0^{I}(K_m), \, \forall K_\mathrm{m} \in \mathcal{T}_{\mathrm{h}}^\mathrm{m} \Big\}
    \end{aligned}
    \label{eq:fem_discrete_spaces}
\end{equation}
where $\mathcal{N}_0^{I}(K):= \big\{ \ba + \bb \times \bx\,|\,\ba,\bb \in \mathbb{R}^3 \big\}$, 
see e.g. \cite{bossavit-cem-98}. The triangulations $\mathcal{T}_{H}^{\mathrm{M}}$ and $\mathcal{T}_{\mathrm{h}}^{\mathrm{m}}$ are 
defined on the macroscale and mesoscale domains, respectively.

The weak forms \eqref{eq:classical_macro} and \eqref{eq:classical_meso} can then 
be computed using numerical quadrature rules. This implies that the quantities 
involved in the integrations (e.g. the homogenized material law) be known at Gau{\ss} points ($i=1,\ldots,N_\mathrm{GP}$).
Omitting the superscript $^i$ used for the numbering of the Gau{\ss} in 
the approximation of the mesoscale field $\ba_{\mathrm{c}}^{\mathrm{h}}(\by,t)$,
the discrete spaces give rise to the approximations
\begin{equation}
  \ba_{\mathrm{M}}(\bx,t) \approx \ba_{\mathrm{M}}^{H}(\bx,t) = \sum_{p=1}^{N_\mathrm{M}} \ah_{\mathrm{M},p}(t) \ba_{\mathrm{M},p}(\bx)
  \qquad\mathrm{and}\qquad
  \ba_{\mathrm{c}}^{(i)}(\by,t) \approx \ba_{\mathrm{c}}^{h}(\by,t) = \sum_{p=1}^{N_\mathrm{c}} \ah_{\mathrm{c},p}(t) \ba_{\mathrm{c},p}(\by).
  \label{eq:fem_meso}
\end{equation}
Testing \eqref{eq:classical_macro} and \eqref{eq:classical_meso} yields
the macroscale mass matrix
\begin{align}
    \nonumber
    \bM_{\mathrm{M}}&:=\Big( \sigma_\mathrm{M} \ba_\mathrm{M}, \ba_\mathrm{M}^{\prime}\Big)_{\Omega_\mathrm{c}}
\intertext{which is singular due to $\sigma_M=0$ on $\Omega_{\mathrm{c}}^C$ and the stiffness term}
    \bF_{\mathrm{M}}(\bah_\mathrm{M},\bah_{\mathrm{c}})&:=\Big( \bH_\mathrm{M}\big(\Curl[_\mathrm{y}]{\ba_{\mathrm{c}}} + \Curl[_{\mathrm{x}}]{\ba}_\mathrm{M},\bx \big),\Curl[_\mathrm{x}]{\ba_\mathrm{M}^{\prime}} \Big)_{\Omega}
    + \Big< \bh_t, \ba_\mathrm{M}^{\prime} \Big>_{\Gamma_{\mathrm{h}}} 
    - \Big( \bj_\mathrm{s}, \ba_\mathrm{M}^{\prime} \Big)_{\Omega_\mathrm{s}}
    \label{eq:meso_stiffness}
\end{align}
with $\ba_{\mathrm{c}}=[\ba_{\mathrm{c}}^{(1)},\ldots,\ba_{\mathrm{c}}^{(N_\mathrm{GP})}]$. Similar definitions hold for 
$\bM_{\mathrm{m}}$ and $\bF_{\mathrm{m}}$ on the mesoscale.
The extension to higher order edge elements or nodal elements for 2D problems is straightforward.
Following the classical approach, numerical quadrature rules are used to compute the weak forms. For the 
macroscale problem, we use numerical quadrature with one Gau{\ss} point which 
is enough to capture the slow variations of the missing material law at Gau{\ss} points. 
The missing material law can also be computed at the barycenter of the element \cite{abdulle2012coupling}.

\begin{prob}[Semidiscrete multiscale problem]
Find waveforms $[\bah_{\mathrm{M}}(t),\bah_{\mathrm{c}}^{(1)}(t), \ldots, \bah_{\mathrm{c}}^{(N_\mathrm{GP})}(t)]$
such that
\begin{equation}
    \bM_{\mathrm{M}}\partial_t \bah_\mathrm{M}+\bF_{\mathrm{M}}(\bah_\mathrm{M},\bah_{\mathrm{c}})=0, 
  \label{eq:newton-macro-semidiscrete}
\end{equation}
and for the mesoscale problems $i=1,\ldots,N_\mathrm{GP}$
\begin{equation}
    \bM_{\mathrm{m}}\partial_t \bah_{\mathrm{c}}^{(i)}+\bF_\mathrm{m}(\bah_{\mathrm{c}}^{(i)},\bah_{\mathrm{M}}^{(i)}, \partial_t\bah_\mathrm{M}^{(i)})=0
    \label{eq:newton-meso-semidiscrete}
\end{equation}
for a given set of initial values $[\bah_{\mathrm{M}}(t_0),\bah_{\mathrm{c}}^{(1)}(t_0), \ldots, \bah_{\mathrm{c}}^{(N_\mathrm{GP})}(t_0)]$.
\end{prob}

Finally, the time-dependent Problem (\ref{eq:newton-macro-semidiscrete}-\ref{eq:newton-meso-semidiscrete}) can be solved using any classical (implicit) 
time integration scheme followed by a nonlinear solution method. 
In the simplest case, i.e., using the backward Euler scheme, the following 
nonlinear problem has to be solved for
for each time step:
\begin{prob}[Nonlinear, discrete multiscale problem]
Find the solutions
\begin{equation*}
  [\bah_{\mathrm{M}}^{(k)},\bah_{\mathrm{c}}^{(1, k)}, \ldots, \bah_{\mathrm{c}}^{(N_\mathrm{GP}, k)}]\in \mathrm{I\!R}^{N_\mathrm{M}+N_\mathrm{GP}\cdot N_\mathrm{c}}
\end{equation*}
such that
\begin{equation}
	\bR_{\mathrm{M}}\left(\bah_{\mathrm{M}}^{(k)},\bah_{\mathrm{c}}^{(i, k)}\right):=\bM_{\mathrm{M}}
	\frac{\bah_{\mathrm{M}}^{(k)} - \bah_{\mathrm{M}}^{(k-1)}}{\Delta t_k}
	+
	\bF_{\mathrm{M}} \Big(\bah_{\mathrm{M}}^{(k)}, \bah_{\mathrm{c}}^{(k)} \Big)
	= 0, 
  \label{eq:newton-macro}
\end{equation}
and for the mesoscale problems $i=1,\ldots,N_\mathrm{GP}$
\begin{equation}
    \bR_{\mathrm{m}}\left(\bah_{\mathrm{M}}^{(k)},\bah_{\mathrm{c}}^{(i, k)}\right):=
    \bM_{\mathrm{m}}
    \frac{\bah_{\mathrm{c}}^{(i, k)} - \bah_{\mathrm{c}}^{(i, k-1)}}{\Delta t_k}
    +
    \bF_{\mathrm{m}} \left(\bah_{\mathrm{c}}^{(i, k)}, \bah_{\mathrm{M}}^{(i, k)},\\
    \frac{\bah_{\mathrm{M}}^{(i, k)} - \bah_{\mathrm{M}}^{(i, k-1)}}{\Delta t_k} \right) 
    = 0,
    \label{eq:newton-meso}
\end{equation}
where the superscript $k$ is used to denote the approximations at time
instants $t_{k}\in[t_0,t_\mathrm{end}]$ , e.g. 
$\bah_{\mathrm{M}}^{(k)}\approx\bah_{\mathrm{M}}(t_k)$ and $\Delta t_k:=t_{k+1}-t_{k}$ is the corresponding time step size.
\label{eq:discret_multiscale_problem}
\end{prob}
\noindent The following loops are defined for the monolithic and the waveform relaxation 
methods: the loop for the number of time windows (TW) with $1 \leq n \leq N_{\mathrm{TW}}$, 
the loop for the number of waveform relaxation iterations
(WR) with $1 \leq l \leq N_{\mathrm{WR}}$, the loop for the number of time stepping (TS) with
$1 \leq k \leq N_{\mathrm{TS}}$, the loop for the number of Newton--Raphson nonlinear
iterations (NR) with $1 \leq j \leq N_{\mathrm{NR}}$ and the loop for the number of Gau{\ss}
points (GP) with $1 \leq i \leq N_{\mathrm{GP}}$. Table \ref{tab:loops}
summarizes the loops, the letter used for indexing them and the total number of
iterations for each loop.
\begin{table}[ht!]
\caption{Loops involved in the monolithic and waveform relaxation HMM.}
\label{tab:loops}
\medskip\centering
\begin{tabular}{| l | c | c | c |}
\hline
Type of loop        & Loop   & Index   & Maximum number of iterations\\
\hline
Time window         & TW     & $n$     & $N_{\mathrm{TW}}$   \\
Waveform relaxation & WR     & $l$     & $N_{\mathrm{WR}}$   \\
Time stepping       & TS     & $k$     & $N_{\mathrm{TS}}$   \\
Newton--Raphson     & NR     & $j$     & $N_{\mathrm{NR}}$   \\
Gau{\ss} points     & GP     & $i$     & $N_{\mathrm{GP}}$   \\
\hline
\end{tabular}
\end{table}

\section{Monolithic HMM}
\label{fe-hmm-mono}
In the following a rigorous interpretation of the time-stepping procedures proposed in the context of HMM is given in terms of Problem~\ref{eq:discret_multiscale_problem}. These derivations are an important building block for the comparison with the waveform relaxation approach in Section~\ref{sec:waveform}. 

In \cite{niyonzima-14} the Algorithm~\ref{alg:FE-HMM-Macro} was proposed. 
For each time step, a nonlinear system on the macroscale is solved using the Newton--Raphson
method until convergence is reached. In each Newton iteration the material law \eqref{eq:Homogenized-Law-FE-HMM} is evaluated
$$
\bH_\mathrm{M}^{(i, j, k)}
:= \frac{1}{|\Omega_{\mathrm{m}}|} \int_{\Omega_{\mathrm{m}}} \bH\Big(
  \bb_\mathrm{c}^{(i, j, k)},
  \Curl[_{\mathrm{x}}]\ba_\mathrm{M}^{(j, k)}, 
  \bx,\by
\Big)\;\mathrm{d} \by
$$
where $\bb_\mathrm{c}^{(i,j, k)} = \Curl[_\mathrm{y}]{\boldsymbol{\mathcal{A}}_{\mathrm{c}}(\by, \ba_{\mathrm{M}}^{(j, k)})}$ is obtained from the discretized version of the nonlinear solution operator 
given in \eqref{eq:bc-fe-hmm}. This is implemented by solving the nonlinear equation \eqref{eq:newton-meso} again by the Newton--Raphson method using $N_{\mathrm{NR}}^{\mathrm{m}}$ iterations, cf. Algorithm~\ref{alg:FE-HMM-Meso}. This relaxation within the Newton scheme corresponds to a monolithic time-stepping scheme 
although it features parallel evaluations at the Gau{\ss} points at the level of the nonlinear solver. 
The two nested new Newton loops (inner and outer) are a special case of a \textit{parallel multilevel Newton scheme} 
as they are used for example in circuit simulation \cite{Grab_1996aa}. 
Let us state the equivalence for the case in which only one inner iteration of a 
simplified Newton--Raphson scheme is carried out.
This is closely related to the Newton--Raphson scheme developed in 
\cite{Henning_2015a} which involves the evaluation of the Fr\'{e}chet derivative 
of the nonlinear corrector operator.
\medskip 

\noindent Let $\bah_{\mathrm{M}}^{(j,k)}$ and $\bah_{\mathrm{c}}^{(i,j,k)}$ denote the $j^{\mathrm{th}}$
Newton--Raphson iterates. Then we define 
\begin{equation}
\bF_{\mathrm{M}}^{(j, k)} := \bF_{\mathrm{M}} \Big(\bah_{\mathrm{M}}^{(j, k)}, 
\bah_{\mathrm{c}}^{(j, k)} \Big) 
\quad , \quad
\bF_{\mathrm{m}}^{(i, j, k)} := \bF_{\mathrm{m}} \left(\bah_{\mathrm{c}}^{(i, j, k)}, \bah_{\mathrm{M}}^{(i, j, k)},
\frac{\bah_{\mathrm{M}}^{(i, j, k)} - \bah_{\mathrm{M}}^{(i, j, k-1)}}{\Delta t_k} \right).
\end{equation}

\begin{prop}
Solving the monolithic system \eqref{eq:newton-macro}-\eqref{eq:newton-meso} with the Newton--Raphson scheme using the Jacobian
\begin{equation}
J_{\bR}^{(j,k)} := 
\frac{1}{\Delta t_k}
\begin{pmatrix} 
    \displaystyle{\bM_{\mathrm{M}}\vphantom{\frac{\partial \bF^{(j,k)}_{\mathrm{M}}}{\partial \bah_{\mathrm{M}}^{(j,k)}}}}& 
    \displaystyle{0} & 
    \displaystyle{\cdots} & 
    \displaystyle{0} 
    \\ 
    \displaystyle{0} & 
    \displaystyle{\bM_{\mathrm{m}}\vphantom{\frac{\partial \bF^{(j,k)}_{\mathrm{M}}}{\partial \bah_{\mathrm{M}}^{(j,k)}}}} & 
    \displaystyle{0} & 
    \displaystyle{0}    
    \\ 
    \displaystyle{\vdots} & 0 & \displaystyle{\ddots} & 0 
    \\ 
    \displaystyle{0} & 
    0 & 
    0 & 
    \displaystyle{\bM_{\mathrm{m}}\vphantom{\frac{\partial \bF^{(j,k)}_{\mathrm{M}}}{\partial \bah_{\mathrm{M}}^{(j,k)}}}}
\end{pmatrix}+
\begin{pmatrix} 
    \displaystyle{\frac{\partial \bF^{(j,k)}_{\mathrm{M}}}{\partial \bah_{\mathrm{M}}^{(j,k)}}}& 
    \displaystyle{\frac{\partial \bF^{(j,k)}_{\mathrm{M}}}{\partial \bah_{\mathrm{c}}^{(1,j,k)}}} & 
    \displaystyle{\cdots} & 
    \displaystyle{\frac{\partial \bF^{(j,k)}_{\mathrm{M}}}{\partial \bah_{\mathrm{c}}^{(N_{\mathrm{GP}},j,k)}}} 
    \\ 
    \displaystyle{\frac{\partial \bF^{(1,j,k)}_{\mathrm{m}}}{\partial \bah_{\mathrm{M}}^{(j,k)}}} & 
    \displaystyle{\frac{\partial \bF^{(1,j,k)}_{\mathrm{m}}}{\partial \bah_{\mathrm{c}}^{(1,j,k)}}} & 
    \displaystyle{0} & 
    \displaystyle{0}    
    \\ 
    \displaystyle{\vdots} & 0 & \displaystyle{\ddots} & 0 
    \\ 
    \displaystyle{\frac{\partial \bF^{(N_{\mathrm{GP}},j,k)}_{\mathrm{m}}}{\partial \bah_{\mathrm{M}}^{(j,k)}}} & 
    0 & 
    0 & 
    \displaystyle{\frac{\partial \bF^{(N_{\mathrm{GP}},j,k)}_{\mathrm{m}}}{\partial \bah_{\mathrm{c}}^{(N_{\mathrm{GP}},j,k)}}}
\end{pmatrix}\label{eq:Jacobian-full}
\end{equation}is equivalent to the scheme proposed in Algorithms~\ref{alg:FE-HMM-Macro} and \ref{alg:FE-HMM-Meso} if 
\begin{itemize}
  \item[(a)] no inner Newton iterations on the mesoscale are carried out, i.e., 
  $N_{\mathrm{NR}}^{\mathrm{m}}=1$ and 
  \item[(b)] the sensitivity of the mesoscale Jacobian w.r.t. to the macroscale 
  is disregarded, i.e., $\partial J_\mathrm{m}^{(i, j, k)}/\partial\bah_{\mathrm{M}}^{(j,k)}=0$
\end{itemize}
\end{prop}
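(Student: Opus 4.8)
The plan is to establish the equivalence by unrolling both sides: on one side the abstract Newton iteration for the monolithic residual system \eqref{eq:newton-macro}--\eqref{eq:newton-meso} with the block Jacobian \eqref{eq:Jacobian-full}, and on the other side the nested loops of Algorithms~\ref{alg:FE-HMM-Macro} and \ref{alg:FE-HMM-Meso}. First I would fix a time step $k$ and write the stacked unknown $\bah = [\bah_{\mathrm{M}};\bah_{\mathrm{c}}^{(1)};\ldots;\bah_{\mathrm{c}}^{(N_{\mathrm{GP}})}]$ and residual $\bR = [\bR_{\mathrm{M}};\bR_{\mathrm{m}}^{(1)};\ldots;\bR_{\mathrm{m}}^{(N_{\mathrm{GP}})}]$, so that one Newton step is $J_{\bR}^{(j,k)}\,\delta\bah = -\bR^{(j,k)}$. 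The key structural observation is that $J_{\bR}$ is block lower-Hessenberg with a \emph{decoupled} mesoscale block row: each mesoscale row $i$ couples only to $\bah_{\mathrm{M}}$ and to its own $\bah_{\mathrm{c}}^{(i)}$, while the macroscale row couples to all of them through $\partial\bF_{\mathrm{M}}/\partial\bah_{\mathrm{c}}^{(i,j,k)}$. I would then perform block Gaussian elimination (a Schur complement with respect to the $\bM_{\mathrm{m}}/\Delta t_k + \partial\bF_{\mathrm{m}}/\partial\bah_{\mathrm{c}}$ diagonal blocks), exactly as announced earlier in the excerpt for the reduced Jacobian: solving each mesoscale block for $\delta\bah_{\mathrm{c}}^{(i)}$ in terms of $\delta\bah_{\mathrm{M}}$ yields a condensed macroscale system whose matrix is $\bM_{\mathrm{M}}/\Delta t_k + \partial\bF_{\mathrm{M}}/\partial\bah_{\mathrm{M}} - \sum_i (\partial\bF_{\mathrm{M}}/\partial\bah_{\mathrm{c}}^{(i)})\,(\text{mesoscale block})^{-1}\,(\partial\bF_{\mathrm{m}}^{(i)}/\partial\bah_{\mathrm{M}})$.

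Next I would match this reduced step against what the algorithms actually do. The outer Newton loop of Algorithm~\ref{alg:FE-HMM-Macro} updates $\bah_{\mathrm{M}}$ using the homogenized law $\bH_{\mathrm{M}}^{(i,j,k)}$ and its derivative; the latter is, by the chain rule applied to \eqref{eq:Homogenized-Law-FE-HMM} composed with the corrector operator $\boldsymbol{\mathcal{A}}_{\mathrm{c}}$, precisely the Schur-complement correction above — this is where hypothesis (b) enters. Assumption (b), $\partial J_{\mathrm{m}}^{(i,j,k)}/\partial\bah_{\mathrm{M}}^{(j,k)}=0$, says the inner mesoscale Jacobian is frozen with respect to the macroscale input, so that differentiating the mesoscale solution map $\bah_{\mathrm{M}}\mapsto\bah_{\mathrm{c}}^{(i)}$ by the implicit function theorem gives exactly $-(\text{mesoscale block})^{-1}(\partial\bF_{\mathrm{m}}^{(i)}/\partial\bah_{\mathrm{M}})$ with no extra second-derivative term; the Fréchet-derivative computation of \cite{Henning_2015a} is the template here. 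Assumption (a), $N_{\mathrm{NR}}^{\mathrm{m}}=1$, ensures the inner solve in Algorithm~\ref{alg:FE-HMM-Meso} is itself a single (simplified) Newton step, i.e. $\delta\bah_{\mathrm{c}}^{(i)} = -(\text{mesoscale block})^{-1}\bR_{\mathrm{m}}^{(i,j,k)}$, matching the elimination formula line for line rather than being an exact inner solve. I would then verify that the back-substitution step — recovering $\delta\bah_{\mathrm{c}}^{(i)}$ from $\delta\bah_{\mathrm{M}}$ after the macroscale update — coincides with the "downscaling" evaluation of $\boldsymbol{\mathcal{A}}_{\mathrm{c}}$ in the next algorithmic sub-iteration, closing the loop.

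Finally I would assemble these identifications into a proof by induction on $j$: assuming the iterates agree at step $j$, the block-eliminated Newton step produces the same $\bah_{\mathrm{M}}^{(j+1,k)}$ and $\bah_{\mathrm{c}}^{(i,j+1,k)}$ as one pass through Algorithms~\ref{alg:FE-HMM-Macro}--\ref{alg:FE-HMM-Meso}, and both schemes use the same initial guess (typically the previous time step's solution), so they generate identical sequences. I expect the main obstacle to be the bookkeeping in the chain rule for $\partial\bH_{\mathrm{M}}/\partial\bah_{\mathrm{M}}$: one must carefully track which arguments of $\bH$ in \eqref{eq:Homogenized-Law-FE-HMM} depend on $\bah_{\mathrm{M}}$ directly (through $\bb_{\mathrm{M}}$) versus indirectly (through $\bb_{\mathrm{c}}=\Curl[_{\mathrm{y}}]\boldsymbol{\mathcal{A}}_{\mathrm{c}}(\by,\ba_{\mathrm{M}})$), and show that the "direct $+$ indirect" split reproduces exactly the "$\partial\bF_{\mathrm{M}}/\partial\bah_{\mathrm{M}}$ $+$ Schur correction" split of the eliminated Jacobian. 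Verifying that the discrete quadrature versions of these two splittings coincide — rather than merely their continuous counterparts — and checking that hypotheses (a) and (b) are exactly what is needed to kill the otherwise-present extra terms, is the technical heart of the argument; everything else is linear algebra on block-triangular matrices.
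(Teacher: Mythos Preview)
Your proposal is correct and follows essentially the same route as the paper: compute the Schur complement of the block Jacobian \eqref{eq:Jacobian-full} with respect to the mesoscale diagonal blocks, then identify the resulting reduced macroscale Jacobian with the one assembled in Algorithm~\ref{alg:FE-HMM-Macro} by differentiating the one-step mesoscale solution map $\boldsymbol{\mathcal{A}}_{\mathrm{c}}^{(i)}$ (assumption~(a)) and dropping the term $\partial J_{\mathrm{m}}^{(i,j,k)}/\partial\bah_{\mathrm{M}}^{(j,k)}$ (assumption~(b)). The paper's proof is terser---it simply writes out the Schur complement, the explicit one-step Newton formula for $\boldsymbol{\mathcal{A}}_{\mathrm{c}}^{(i)}$, and its derivative---whereas you add the induction on $j$ and the back-substitution check, but these are elaborations of the same argument rather than a different approach.
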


\begin{proof}
The equivalence is easily established by comparing the solution operator 
${\mathcal{A}}_{\mathrm{c}}$ as used in Algorithms~\ref{alg:FE-HMM-Macro}-\ref{alg:FE-HMM-Meso} 
to the Schur complement of the Jacobian $J_{\bR}^{(j,k)}$ as given in \eqref{eq:Jacobian-full} and 
already proposed in \cite{bottauscio-13} for the Variational Multiscale Method. The latter reads
\begin{equation}
\bar{J}_{\bR}^{(j,k)} :=
\frac{\bM_{\mathrm{M}}}{\Delta t_k}+
\displaystyle{\frac{\partial \bF^{(j,k)}_{\mathrm{M}}}{\partial \bah_{\mathrm{M}}^{(j,k)}}}
-\sum_{i=1}^{N_{\mathrm{GP}}}
\left(
  \displaystyle{\frac{\partial \bF^{(j,k)}_{\mathrm{M}}}{\partial \bah_{\mathrm{c}}^{(i,j,k)}}}  
  \left(
    \displaystyle{\frac{\bM_{\mathrm{m}}}{\Delta t_k}+\frac{\partial \bF^{(i,j,k)}_{\mathrm{m}}}{\partial \bah_{\mathrm{c}}^{(i,j,k)}}}
  \right)^{-1} \, 
  \displaystyle{\frac{\partial \bF^{(i,j,k)}_{\mathrm{m}}}{\partial \bah_{\mathrm{M}}^{(j,k)}}}
\right).
\label{eq:Jacobian-schur-complement-macro}
\end{equation}
Since Assumption (a), i.e., $N_{\mathrm{NR}}^{\mathrm{m}}=1$ in Algorithm~\ref{alg:FE-HMM-Meso} holds, the discretized version of the solution operator $\boldsymbol{\mathcal{A}}_{\mathrm{c}}$ applied to the linearized problem \eqref{eq:newton-meso} can be explicitly given as 
$$
\bah_{\mathrm{c}}^{(i,j+1,k)}=
\boldsymbol{\mathcal{A}}_{\mathrm{c}}^{(i)}(\bah_{\mathrm{M}}^{(j,k)})
=
\bah_{\mathrm{c}}^{(i,j,k)}
-\left(J_\mathrm{m}^{(i, j, k)}\right)^{-1}
\bR_{\mathrm{m}}\left(\bah_{\mathrm{M}}^{(j, k)},\bah_{\mathrm{c}}^{(i, j, k)}\right)
\qquad
\mathrm{with}
\qquad
J_\mathrm{m}^{(i, j, k)}:=\frac{\bM_{\mathrm{m}}}{\Delta t_k}+\frac{\partial \bF^{(i,j,k)}_{\mathrm{m}}}{\partial \bah_{\mathrm{c}}^{(i,j,k)}},
$$
which yields immediately the derivative with respect to the macro scale
\begin{equation}
  \label{eq:Homogenized-Law-FE-HMM-1}
\frac{\partial {\boldsymbol{\mathcal{A}}}_{\mathrm{c}}^{(i)}}{\partial \bah_{\mathrm{M}}^{(j,k)}}(\bah_{\mathrm{M}}^{(j,k)})
\approx
-\left(J_\mathrm{m}^{(i, j, k)}\right)^{-1}
\frac{\partial \bF^{(i,j,k)}_{\mathrm{m}}}{\partial \bah_{\mathrm{M}}^{(j,k)}}.
\end{equation}
where the contribution from the Jacobian $J_\mathrm{m}^{(i, j, k)}$ is disregarded due to Assumption (b). Summing up all contributions 
$$
\boldsymbol{\mathcal{A}}_{\mathrm{c}}(\bah_{\mathrm{M}}^{(j,k)})=
\sum_{i=1}^{N_{\mathrm{GP}}}\boldsymbol{\mathcal{A}}_{\mathrm{c}}^{(i)} (\bah_{\mathrm{M}}^{(j,k)}),
$$
plugging them into the macroscale stiffness matrix \eqref{eq:meso_stiffness} and exploiting \eqref{eq:Homogenized-Law-FE-HMM-1} concludes the proof 
$$
J_\mathrm{M}^{(j, k)}
=
\frac{\bM_{\mathrm{M}}}{\Delta t_k}
+
\frac{\mathrm{d}\bF_{\mathrm{M}}}{\mathrm{d} \bah_\mathrm{M}}\left(\bah_\mathrm{M}^{(j,k)},\boldsymbol{\mathcal{A}}_{\mathrm{c}}(\bah_{\mathrm{M}}^{(j,k)})\right)
\stackrel{!}{=}
\bar{J}_{\bF}^{(j,k)}
.
\qedhere
$$

\end{proof}

In practice the assumptions (a) and (b) can be violated and one will end up with a different variant of the Newton--Raphson scheme. For example: the computation \eqref{eq:Homogenized-Law-FE-HMM-1} involves the derivative of 
the correction terms $\boldsymbol{\mathcal{A}}_{\mathrm{c}}$ with respect to 
the macroscale magnetic density $\bah_{\mathrm{M}}$,
In \cite{niyonzima-14}, it was proposed to solve several mesoscale problems  
per Gau{\ss} point in parallel: for a two-dimensional problem, $N^\mathrm{m}_\mathrm{dim}=3$ problems were
solved to approximate the Jacobian, the first one with the nominal macroscale source 
(e.g. $\bah_{\mathrm{M}}^{(i)}$) and the other two with a perturbated source magnetic density 
$\bah_{\mathrm{M}}^{(i)} + \delta^{(i)} \bah_i$ where $\delta^{(i)}$ is a small 
perturbation and $\bah_i$ is a vector oriented along the $x$, $y$ or $z$-axes.  
Similarly, in 3D $N^\mathrm{m}_\mathrm{dim}=4$ problems need to be solved. 
On the other hand, one can use a fixed point iteration scheme as suggested 
in \cite{Abdulle_2015aa}, which is actually the limit case of a a waveform 
relaxation approach, where time window size equals time step size.
\begin{center}
\begin{algorithm}[!]
\SetKwInput{Input}{Input}
\SetKwInput{Output}{Output}
\LinesNumbered
\DontPrintSemicolon
\Input{macroscale source $\bj_{\mathrm{s}}$ and mesh.}
\Output{fields (macro/meso), global quantities.}
\Begin
{   $t \gets t_{0}$, initialize the macroscale field $\ba_{\mathrm{M}}|_{t_0} = \ba_{\mathrm{M}0}$,\\
    \emph{\# begin the macroscale time loop (index $k$)}\\
    \For{$(k \gets 1$ \KwTo $N_{\mathrm{TS}} )$}{ 
    \emph{\# begin the macroscale NR loop (index $j$)}
    \\ 
        \For{$(j \gets 1$ \KwTo $N_{\mathrm{NR}}^{\mathrm{M}} )$}{
        \emph{\# parallel resolution of mesoscale problems  (index $i$)}
        \\
        \For{$(i \gets 1$ \KwTo $N_{\mathrm{GP}} )$}{
            downscale the sources $\bah_{\mathrm{M}}^{(i, j, k-1)}$,
            \vspace{1mm}
            \\
            compute $\bah_{\mathrm{c}}^{(i, j, k)}=\boldsymbol{\mathcal{A}}_{\mathrm{c}}(\by,\bah_{\mathrm{M}}^{(i, j, k)})$,
            see Algorithm~\ref{alg:FE-HMM-Meso}
            \\
            compute the homogenized law $\bH_{\mathrm{M}}^{(i, j, k)}$ and $\partial 
            \boldsymbol{\mathcal{H}}_{\mathrm{M}}^{(i, j, k)}/ \partial \bb_{\mathrm{M}}^{(i, j, k)}$,
            \\
            \vspace{1mm}
            upscale the homogenized law $\bH_{\mathrm{M}}^{(i, j, k)}$ and $\partial 
            \boldsymbol{\mathcal{H}}_{\mathrm{M}}^{(i, j, k)}/ \partial \bb_{\mathrm{M}}^{(i, j, k)}$, 
        }
        assemble the Jacobian $\displaystyle{J_\mathrm{M}^{(j, k)}=
        \frac{1}{\Delta t_k}\bM_{\mathrm{M}}+\frac{\mathrm{d}\bF_{\mathrm{M}}}{\mathrm{d} 
        \bah_\mathrm{M}}\left(\bah_\mathrm{M}^{(j,k)},\boldsymbol{\mathcal{A}}_{\mathrm{c}}
        (\bah_{\mathrm{M}}^{(j,k)})\right)}$ to solve \eqref{eq:newton-macro},
        }
    }
}
\caption{Pseudocode for the monolithic FE-HMM}
\label{alg:FE-HMM-Macro}
\end{algorithm}
\end{center}
\begin{center}
\begin{algorithm}[!]
\SetKwInput{Input}{Input}
\SetKwInput{Output}{Output}
\LinesNumbered
\DontPrintSemicolon
\Input{macroscale sources $\bah_{\mathrm{M}}^{(i, j, k)}$ and the mesoscale mesh.}
\Output{homogenized law $\bH_{\mathrm{M}}^{(i, j, k)}$, per Gau{\ss} point for 
$N^\mathrm{m}_\mathrm{dim}$ problems.}
\Begin
{   
    prescribe periodic boundary conditions, impose sources, \\
    $t \gets t_{\mathrm{M}}$, initialize the correction $\ba_{\mathrm{c}}|_{t_{\mathrm{M}}}$,\\
    solve $N^\mathrm{m}_\mathrm{dim}$ mesoscale problems for the $k^{th}$ time step,\\ 
    \For{$(p \gets 1$ \KwTo $N^\mathrm{m}_\mathrm{dim})$}{
            \emph{\# begin the mesoscale NR loop (index $j$)}\\
            \For{$(j \gets 1$ \KwTo $ N_{\mathrm{NR}}^{\mathrm{m}} )$}{
                assemble the Jacobian $J_\mathrm{m}^{(i, j, k)}$ 
                to solve \eqref{eq:newton-meso}.
            }
    }
}
\caption{Pseudocode for one mesoscale problem}
\label{alg:FE-HMM-Meso}
\end{algorithm}
\end{center}
\section{The waveform relaxation method}
\label{sec:waveform}
Waveform relaxation methods solve time dependent problems iteratively, i.e., they
generalize the classical ideas of Gau{\ss}--Seidel and Jacobi iteration to the time
domain. The method starts with an initial guess of the solution over a time interval
and computes iteratively approximations of increasing accuracy \cite{Gander_2012aa}.
Typically the problem is decomposed into subproblems and each subproblem is solved separately. 
Let us consider the two ordinary differential equations 
\begin{align*}
	\partial_t y_1 &= f_1(y_1,y_2) \\
	\partial_t y_2 &= f_2(y_1,y_2). 
\end{align*}
A monolithic or strongly coupled approach discretizes the problem in time as one system of equations. On the other hand, an iterative Gau{\ss}--Seidel type scheme 
\begin{align*}
	\partial_t y_1^{(l)} &= f_1(y_1^{(l)},y_2^{(l-1)})\\
	\partial_t y_2^{(l)} &= f_2(y_1^{(l)},y_2^{(l)\phantom{-1}})
\end{align*}
will resolve both equations subsequently, e.g., the first one for the unknown $y_1^{(l)}(t)$ on $t\in\mathcal{I}$ while considering $y_2^{(l-1)}(t)$ on $t\in\mathcal{I}$ given and vice-versa. The very first iteration requires an initial guess $y_2^{(0)}$, which is typically obtained by constant extrapolation, \cite{Arnold_2001aa}. In the simplest case an implicit Euler method can be chosen for time stepping, e.g.
\begin{align*}
	\frac{y_1^{(k,l)}-y_1^{(k-1,l)}}{\Delta t_k} &= f_1(y_1^{(k,l)},y_2^{(k,l-1)})\\
	\frac{y_2^{(k,l)}-y_2^{(k-1,l)}}{\Delta t_k} &= f_2(y_1^{(k,l)},y_2^{(k,l)\phantom{-1}}).
\end{align*}
where $y_1^{(k,l)}$ describes the unknown $y_1$ at time $t_k$ and iteration $l$; $\Delta t_k$ denotes the $k$-th time step size for both problems. Obviously, the iteration scheme allows to combine different time integrators with independent time step sizes. It is therefore often referred to as co-simulation or weak coupling.
The convergence is well understood and unconditionally guaranteed for systems of ordinary differential equations \cite{Lelarasmee_1982ab,Burrage_1995aa}. However, already in the case of simple differential algebraic equations, e.g. the system
\begin{align*}
	\partial_t y_1^{(l)} &= f_1(y_1^{(l)},z_1^{(l)},y_2^{(l-1)},z_2^{(l-1)})\\
	0 &= g_1(y_1^{(l)},z_1^{(l)},y_2^{(l-1)},z_2^{(l-1)})
	\qquad\mathrm{with }
	\det\left(\frac{\partial g_1}{\partial z_1^{(l)}}\right)\neq0
	\\
	\partial_t y_2^{(l)} &= f_2(y_1^{(l)},z_1^{(l)},y_2^{(l)\phantom{-1}},z_2^{(l)\phantom{-1}})\\
	0 &= g_2(y_1^{(l)},z_1^{(l)},y_2^{(l)\phantom{-1}},z_2^{(l)\phantom{-1}})
	\qquad\mathrm{with }
	\det\left(\frac{\partial g_2}{\partial z_2^{(l)}}\right)\neq0
\end{align*}
the convergence of the fixed point iteration is conditional. 
In particular the dependence of algebraic equations on old algebraic iterates is 
critical, i.e., the Jacobian ${\partial g_1}/{\partial z_2^{(l)}}$ 
must be sufficiently small, \cite{Miekkala_1987aa}. The convergence for more complex problems,
possibly with higher DAE-index, is even more involved, \cite{Crow_1994}.

Waveform relaxation has been originally applied in the simulation of electrical networks 
but has been applied in various disciplines. Recently, the method was rediscovered to
cosimulate coupled problems \cite{Schops_2010aa}. The method converges particularly
fast on small intervals and hence it is common to subdivide the time interval of
interest into so called time windows and to apply the method on each time window
separately. This subdivision does not hinder the overall convergence since the error
propagation from windows to window can be controlled  \cite{Bartel_2013aa}.
Waveform relaxation is a particular parallel-in-time methods and hence closely 
linked to Parareal \cite{Lions_2001,Gander_2007} which has also recently been applied  
to multiscale problems \cite{Astorino_2015}.

\section{Waveform Relaxation HMM}
\label{sec:waveform-homogenization}

We employ a waveform relaxation-based approach with windowing \cite{white-85}. 
Weak forms similar to \eqref{eq:classical_macro} for the macroscale and 
\eqref{eq:classical_meso} for the mesoscale problem are solved on a series of 
time windows $\mathcal{I}_n=(t_{n-1}, t_{n}]\subset \mathcal{I}$ ($n=1,2,\ldots,N_{\mathrm{TW}}$). 
On each time window, macroscale and mesoscale problems are solved separately in 
time-domain, 
such that waveforms, e.g., $\bah_\mathrm{M}(t)$, are obtained. 
Afterwards the coupling between the problems is introduced by exchanging the waveforms, 
and solving the system iteratively. 
In each waveform relaxation iteration $l$, the resolution of mesoscale problems $($for instance with 
solutions $\bah_\mathrm{c}^{(l)}(t))$, is followed by the resolution of the macroscale 
problem $($for instance with the solution $\bah_\mathrm{M}^{(l)}(t))$ until convergence is reached 
$($for instance \mbox{$\|\bah_\mathrm{M}^{(l-1)}-\bah_\mathrm{M}^{(l)}\|_{L^{\infty}(0,\, T; \Ltwo{\Omega}) } < tol_M)$}.
In the rest of the section,
we will often omit the time window index $n$ to simplify notation 
(for instance $\bah_{\mathrm{M}}^{(j, k, l, n)}$ and $\bah_{\mathrm{c}}^{(i, j, k, l, n)}$ become
$\bah_{\mathrm{M}}^{(j, k, l)}$ and $\bah_{\mathrm{c}}^{(i, j, k, l)}$, respectively). 

For any given waveform relaxation iteration $l$, $N_{\mathrm{GP}}$ mesoscale problems 
are solved (in parallel) using the macroscale source terms from the previous waveform iteration
$l-1$. 
In the following section we discuss these two problems starting with the macroscale.

\subsection{The macroscale problem}
\label{WR-FE-HMM-Macro}
The waveform relaxation starts with the resolution of mesoscale problems. 
The solutions $\bah_{\mathrm{c}}^{(i, k, l, n)}$ are then 
used for computing the homogenized constitutive law needed by the nonlinear 
macroscale problem derived from the semi-discrete equations \eqref{eq:newton-macro}: 
\begin{equation}
  \bM_{\mathrm{M}}
	\frac{\bah_\mathrm{M}^{(k, l)}-\bah_\mathrm{M}^{(k-1, l)}}{\Delta t_k}
	+
	\bF_{\mathrm{M}}\left(\bah_\mathrm{M}^{(k, l)},\bah_{\mathrm{c}}^{(k, l)}\right)
        =0
  \label{eq:newton-macro-WR}
\end{equation}with known (mesoscale) corrections $\bah_{\mathrm{c}}^{(k, l)}$ at time points $t_k$. 
The macroscale and the mesoscale problems are decoupled and the homogenized constitutive law
$\bH_\mathrm{M}^{(k,l)}$ used in \eqref{eq:newton-macro-WR} is upscaled using the formula
\begin{equation*}
\bH_M^{(k,l)}(\bx, t, \bb_M(\bx, t)) = 
\frac{1}{|\Omega_{\mathrm{m}}|} \int_{\Omega_{\mathrm{m}}} \bH(\bx, \by, \bb_{\mathrm{c}}^{(k,l)}(\bx, \by, t) + \bb_M(\bx, t)) \mathrm{d} \by.
\label{eq:Homogenized-Law-WR-FE-HMM}
\end{equation*}
with $\bb_\mathrm{M}=\Curl[_\mathrm{x}]{\ba_{\mathrm{M}}}$ and where the mesoscale field $\bb_\mathrm{c}^{(k,l)}=\Curl[_\mathrm{y}]{\ba_{\mathrm{c}}}^{(k,l)}$ is obtained by solving  
mesoscale problems for a waveform relaxation iteration $l$ as explained in Section \ref{WR-FE-HMM-Meso}.
The decoupling between the macroscale and the mesoscale problems allows to 
compute the homogenized Jacobian directly by
\begin{equation}
\frac{\partial \bH_\mathrm{M}^{(k,l)}}{\partial \bah_\mathrm{M}} 
= \frac{1}{|\Omega_{\mathrm{m}}|} \int_{\Omega_{\mathrm{m}}} \left( \frac{\partial \bH}{\partial \bb_{\mathrm{M}}}(\bb_{\mathrm{c}}^{(k,l)} + \bb_{\mathrm{M}})  \frac{\partial \bb_{\mathrm{M}}}{\partial \bah_{\mathrm{M}}} \right) \mathrm{d} \by,
\end{equation}
since $\bH$ is known as a closed-form expression. This results from the independence of the mesoscale solutions $\bah_{\mathrm{c}}^{(k,l)}$ on the 
macroscale source $\bah_{\mathrm{M}}^{(k,l)}$. Indeed, each mesoscale problem corresponding to 
the Gau{\ss} point denoted by $i$ is computed using the 
macroscale fields from the previous waveform relaxation iteration
\begin{equation*}
\bah_{\mathrm{c}}^{( k, l)} = \boldsymbol{\mathcal{A}}_{\mathrm{c}}(\bah_{\mathrm{M}}^{(k,l-1)}).
\label{eq:bc-wr-fe-hmm}
\end{equation*}
Unlike the mesoscale problem in \eqref{eq:bc-fe-hmm} which was strongly coupled 
with the macroscale problem,
the function $\boldsymbol{\mathcal{A}}_{\mathrm{c}}$ is evaluated at each $\bah_{\mathrm{M}}^{(i,k,l-1)}$ and there is no need to evaluate 
the derivative $\partial {\boldsymbol{\mathcal{A}}_{\mathrm{c}}}/\partial \bah_{\mathrm{M}}$
using the finite difference method as done in \eqref{eq:Homogenized-Law-FE-HMM-1}.
Note however that one mesoscale field computation per Gau{\ss} point is needed 
for each waveform relaxation iteration.

Let us consider the case of a quasi-linear law as e.g. Brauer's model \cite{Brauer_1981aa}:
\begin{equation*}
\bH(\bb) = \mu(\bb) \, \bb 
\end{equation*}
with $\bb = \bb_{\mathrm{M}} + \bb_{\mathrm{c}}^{(k, l)}$, 
the integrand in \eqref{eq:bc-wr-fe-hmm} is calculated using 
\begin{equation*}
\displaystyle \frac{\partial \bH(\bb)}{\partial \bb_{\mathrm{M}}} 
= 
\mu(\bb) 
+ \frac{\partial \mu}{\partial |\bb|^2} 
\frac{\partial |\bb|^2}{\partial \bb_{\mathrm{M}}} \otimes \bb
=  
\mu(\bb) 
+ 2 \frac{\partial \mu}{\partial |\bb|^2} \bb \otimes \bb
\end{equation*}
where $\otimes$ denotes the square dyadic product.

\subsection{Mesoscale problems}
\label{WR-FE-HMM-Meso}
Starting from the mesoscale semi-discrete equations \eqref{eq:newton-meso}
of Problem \ref{eq:discret_multiscale_problem}, 
the following nonlinear mesoscale problems are derived for the $l^{th}$ waveform iteration:
\begin{equation}
    \bM_{\mathrm{m}} \frac{\bah_{\mathrm{c}}^{(i, k, l)} - 
    \bah_{\mathrm{c}}^{(i, k-1, l)}}{\Delta t_k}
    +
    \bF_{\mathrm{m}} \Big(\bah_{\mathrm{c}}^{(i, k, l)}, \bah_{\mathrm{M}}^{(k, l-1)}\Big)    
    = 0, \quad i = 1, \ldots, N_{\mathrm{GP}}.
    \label{eq:newton-meso-WR}
\end{equation}
with known (macroscale) waveforms $\bah_{\mathrm{M}}^{(k, l-1)}$ (given at time points $t_k$).
In this equation, the macroscale source per Gau{\ss} point $\bah_\mathrm{M}^{(i, l-1)}$ 
is considered to be known and taken from the previous waveform relaxation 
iteration $l-1$. The mesoscale problems defined in \eqref{eq:newton-meso-WR} 
can then be solved in parallel on the time window $\mathcal{I}_n$ and the 
mesoscale solutions stored for all Gau{\ss} points of the macroscale grid. 
These solutions are later used for computing the homogenized material law and 
the Jacobian as described in Section \ref{WR-FE-HMM-Macro}. The mesocale corrections 
$\bb_{\mathrm{c}}$ appearing in \eqref{eq:Homogenized-Law-WR-FE-HMM}
is independent from the macroscale field $\bb_{\mathrm{M}}$.

If an implicit Euler scheme is used for the macroscale and the mesoscale problems, 
the overall discretized system consists in solving the following problem:
\begin{prob}[Nonlinear, discrete, WR multiscale problem]
Find a series of solutions
\begin{equation*}
  [\bah_{\mathrm{M}}^{(k, l)},\bah_{\mathrm{c}}^{(1, k, l)}, \ldots, \bah_{\mathrm{c}}^{(N_\mathrm{GP}, k, l)}] \in \mathrm{I\!R}^{N_\mathrm{M}+N_\mathrm{GP}\cdot N_\mathrm{c}}
\end{equation*}
such that
    \begin{align*}
     \bR_{\mathrm{M}}\left(\bah_{\mathrm{M}}^{(k, l)},\bah_{\mathrm{c}}^{(i, k, l)}\right)
     =   \bM_{\mathrm{M}}\frac{\bah_{\mathrm{M}}^{(k, l)} - \bah_{\mathrm{M}}^{(k-1, l)}}{\Delta t_k}
    +
    \mathcal{F}_{\mathrm{M}}\Big(\bah_{\mathrm{M}}^{(k, l)}, \bah_{\mathrm{c}}^{(k, l)}\Big) &= 0 \\
\intertext{and for the mesoscale problems $i=1,\ldots,N_\mathrm{GP}$}
    \bR_{\mathrm{m}}\left(\bah_{\mathrm{M}}^{(k,l-1)},\bah_{\mathrm{c}}^{(i, k, l)}\right)=
    \bM_{\mathrm{m}}
    \frac{\bah_{\mathrm{c}}^{(i, k, l)} - \bah_{\mathrm{c}}^{(i, k-1, l)}}{\Delta t_k}
    + 
    \mathcal{F}_{\mathrm{m}}\Big(\bah_{\mathrm{c}}^{(i, k, l)}, \bah_{\mathrm{M}}^{(k, l-1)}\Big) &= 0.
  \end{align*}
\label{eq:discrete_wr_problem}
\end{prob}
\noindent Let $\bah_{\mathrm{M}}^{(j,k,l)}$ and $\bah_{\mathrm{c}}^{(i,j,k,l)}$ denote the $j^{\mathrm{th}}$
Newton--Raphson iterates. Then we define 
\begin{equation}
    \bF_{\mathrm{M}}^{(j, k, l)} := \bF_{\mathrm{M}} \Big(\bah_{\mathrm{M}}^{(j, k, l)}, 
    \bah_{\mathrm{c}}^{(j, k, l)} \Big) 
    \quad , \quad
    \bF_{\mathrm{m}}^{(i, j, k, l)} := \bF_{\mathrm{m}} \left(\bah_{\mathrm{c}}^{(i, j, k, l)}, \bah_{\mathrm{M}}^{(i, j, k, l)},
    \frac{\bah_{\mathrm{M}}^{(i, j, k, l)} - \bah_{\mathrm{M}}^{(i, j, k-1, l)}}{\Delta t_k} \right).
\end{equation}
\noindent The following total Jacobian must be computed to resolve Problem 6.1:
\newline
\begin{equation}
J_{\bR}^{(j, k, l)} = 
\frac{1}{\Delta t_k}
\begin{pmatrix} 
    \displaystyle{\bM_{\mathrm{M}}\vphantom{\frac{\partial \bF^{(j,k)}_{\mathrm{M}}}{\partial \bah_{\mathrm{M}}^{(j,k)}}}}& 
    \displaystyle{0} & 
    \displaystyle{\cdots} & 
    \displaystyle{0} 
    \\ 
    \displaystyle{0} & 
    \displaystyle{\bM_{\mathrm{m}}\vphantom{\frac{\partial \bF^{(j,k)}_{\mathrm{M}}}{\partial \bah_{\mathrm{M}}^{(j,k)}}}} & 
    \displaystyle{0} & 
    \displaystyle{0}    
    \\ 
    \displaystyle{\vdots} & 0 & \displaystyle{\ddots} & 0 
    \\ 
    \displaystyle{0} & 
    0 & 
    0 & 
    \displaystyle{\bM_{\mathrm{m}}\vphantom{\frac{\partial \bF^{(j,k)}_{\mathrm{M}}}{\partial \bah_{\mathrm{M}}^{(j,k)}}}}
\end{pmatrix}+
\begin{pmatrix} 
    \displaystyle{\frac{\partial \mathcal{F}_{\mathrm{M}}^{(j, k, l)}}{\partial \bah_{\mathrm{M}}^{(j, k, l)} }} & 
    \displaystyle{\frac{\partial \mathcal{F}_{\mathrm{M}}^{(j, k, l)}}{\partial \bah_{\mathrm{c}}^{(1, j, k, l)}}} & 
    \displaystyle{\cdots} & 
    \displaystyle{\frac{\partial \mathcal{F}_{\mathrm{M}}^{(j, k, l)}}{\partial \bah_{\mathrm{c}}^{(N_{\mathrm{GP}}, j, k, l)}}} 
    \\ 
    0 & 
    \displaystyle{\frac{\partial \mathcal{F}_{\mathrm{m}}^{(1, j, k, l)}}{\partial \bah_{\mathrm{c}}^{(1, j, k, l)}}} & 
    0 & 
    0    
    \\ 
    \displaystyle{\vdots} & 0 & \displaystyle{\ddots} & 0 
    \\ 
    0 & 
    0 & 
    0 & 
    \displaystyle{\frac{\partial \mathcal{F}_{\mathrm{m}}^{(N_{\mathrm{GP}}, j, k, l)}}{\partial \bah_{\mathrm{c}}^{(N_{\mathrm{GP}}, j, k, l)}}}
\end{pmatrix}.
\label{eq:Jacobian-full-WR}
\end{equation}
\noindent The decoupling of mesoscale from the macroscale 
solution makes all the elements of the first column equal to zero except for 
$\partial \mathcal{F}_{\mathrm{M}}^{(j, k, l)}/\partial \bah_{\mathrm{M}}^{(j, k, l)} $. Let us state the following result for the sake of completeness:

\begin{prop}
The use of the Jacobian \eqref{eq:Jacobian-full-WR} for solving the waveform relaxation problem is equivalent to the resolution of the following decoupled system for each waveform relaxation iteration $l$:  
\begin{equation*}
\bar{J}_{\mathcal{F}}^{(j, k, l)} = 
\frac{\bM_{\mathrm{M}}}{\Delta t_k}
+
\frac{\partial \mathcal{F}_{\mathrm{M}}^{(j, k, l)}}{\partial \bah_{\mathrm{M}}^{(j, k, l)} }
\label{eq:Jacobian-schur-complement-WR}
\end{equation*}
\end{prop}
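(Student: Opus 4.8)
The plan is to exploit the block structure of the full waveform-relaxation Jacobian \eqref{eq:Jacobian-full-WR} directly, without any approximation, since the coupling between the scales is now one-directional. First I would observe that in the WR scheme the mesoscale residual $\bR_{\mathrm{m}}$ of \eqref{eq:newton-meso-WR} depends on the macroscale only through the frozen iterate $\bah_{\mathrm{M}}^{(k,l-1)}$ from the previous waveform iteration, so that $\partial \mathcal{F}_{\mathrm{m}}^{(i,j,k,l)}/\partial \bah_{\mathrm{M}}^{(j,k,l)} = 0$ identically. This is precisely what produces the zeros in the first block-column (below the $(1,1)$ entry) of the stiffness part of \eqref{eq:Jacobian-full-WR}, and it makes $J_{\bR}^{(j,k,l)}$ block upper triangular with respect to the ordering $(\bah_{\mathrm{M}},\bah_{\mathrm{c}}^{(1)},\ldots,\bah_{\mathrm{c}}^{(N_{\mathrm{GP}})})$, the diagonal blocks being $\bM_{\mathrm{M}}/\Delta t_k + \partial \mathcal{F}_{\mathrm{M}}^{(j,k,l)}/\partial \bah_{\mathrm{M}}^{(j,k,l)}$ and $\bM_{\mathrm{m}}/\Delta t_k + \partial \mathcal{F}_{\mathrm{m}}^{(i,j,k,l)}/\partial \bah_{\mathrm{c}}^{(i,j,k,l)}$ for $i=1,\ldots,N_{\mathrm{GP}}$, all invertible under the monotonicity hypotheses of Section~\ref{sec:mqs-problem}.

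Next I would reuse the Schur-complement computation \eqref{eq:Jacobian-schur-complement-macro} from the monolithic case: eliminating the mesoscale unknowns from $J_{\bR}^{(j,k,l)}$ leaves $\bM_{\mathrm{M}}/\Delta t_k + \partial \mathcal{F}_{\mathrm{M}}^{(j,k,l)}/\partial \bah_{\mathrm{M}}^{(j,k,l)}$ minus a sum of correction terms, each of which carries the now-vanishing factor $\partial \mathcal{F}_{\mathrm{m}}^{(i,j,k,l)}/\partial \bah_{\mathrm{M}}^{(j,k,l)}$; hence the entire sum disappears and the Schur complement collapses exactly to $\bar{J}_{\mathcal{F}}^{(j,k,l)}$ of \eqref{eq:Jacobian-schur-complement-WR}. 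A back-substitution on the resulting upper-triangular linear system then shows that a single Newton step with the full Jacobian produces precisely the decoupled increments $\Delta\bah_{\mathrm{c}}^{(i)} = -\left(\bM_{\mathrm{m}}/\Delta t_k + \partial\mathcal{F}_{\mathrm{m}}^{(i,j,k,l)}/\partial\bah_{\mathrm{c}}^{(i,j,k,l)}\right)^{-1}\bR_{\mathrm{m}}^{(i)}$ (the Newton step for \eqref{eq:newton-meso-WR}) together with $\Delta\bah_{\mathrm{M}} = -\left(\bar{J}_{\mathcal{F}}^{(j,k,l)}\right)^{-1}\bR_{\mathrm{M}}$ (the Newton step for \eqref{eq:newton-macro-WR}). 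Since in the WR-HMM the mesoscale problems are converged first (Section~\ref{WR-FE-HMM-Meso}), the residuals $\bR_{\mathrm{m}}^{(i)}$ are already zero when the macroscale Newton loop is entered, so $\Delta\bah_{\mathrm{c}}^{(i)} = 0$ and only the reduced system governed by $\bar{J}_{\mathcal{F}}^{(j,k,l)}$ remains, which is the assertion.

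The only point requiring care --- and the closest thing to an obstacle --- is showing that the two schemes generate the same macroscale Newton iterates $\bah_{\mathrm{M}}^{(j,k,l)}$ at every iteration $j$, not merely in the limit; this follows by an easy induction once one observes that the mesoscale enters $\bR_{\mathrm{M}}$ only through the frozen corrections $\bah_{\mathrm{c}}^{(k,l)}$, which are identical in both schemes, so the residual and the reduced Jacobian agree for each $j$. I do not expect a genuine difficulty here: the statement is essentially a structural corollary of the one-directional coupling, included --- as the text notes --- for completeness and for symmetry with the monolithic Proposition of Section~\ref{fe-hmm-mono}.
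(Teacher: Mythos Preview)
Your proposal is correct and follows exactly the paper's approach: the paper's entire proof is the single sentence ``The application of the Schur complement to \eqref{eq:Jacobian-full-WR} leads to the conclusion,'' and your argument simply spells out why the Schur complement collapses (the vanishing lower-left blocks coming from the frozen macroscale iterate). Your additional back-substitution and induction remarks are sound but go beyond what the paper records.
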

\begin{proof}
The application of the Schur complement to \eqref{eq:Jacobian-full-WR} leads to 
the conclusion.
\end{proof}
\begin{algorithm}
\SetKwInput{Input}{Input}
\SetKwInput{Output}{Output}
\LinesNumbered
\DontPrintSemicolon
\Input{macroscale source $\bj_{\mathrm{s}}$ and mesh.}
\Output{fields (macro/meso), global quantities.}
\Begin
{   
    \emph{\# begin loops over time windows (index $n$)} \\
    \For{$(n \gets 1$ \KwTo $N_{\mathrm{TW}})$}{
        \emph{\# begin waveform relaxation (WR) loop (index $l$)} \\
        \For{$(l \gets 1$ \KwTo $N_{\mathrm{WR}} )$}{
            \emph{\# 1. Parallel resolution of meso-problems (index $i$)} \\
            \For{$(i \gets 1$ \KwTo $N_{\mathrm{GP}} )$}{
                downscale the sources $\bah_{\mathrm{M}}^{(l-1)}$,
                \vspace{1mm}
                \\
                solve mesoscale problems (1 per Gau{\ss} point) on $[t_k, t_{k+1}]$,
                \vspace{1mm}
                \\
                save the solution,
            }
            \emph{\# 2. Resolution of the macro-problem on $[t_k, t_{k+1}]$} \\
            $t \gets t_k$, initialize the macro-field $\ba_{\mathrm{M}}|_{t_k} = \ba_{{\mathrm{M}}k}$, \\
            \emph{\# begin the macroscale time loop}(index $k$) \\
            \For{$(k \gets 1$ \KwTo $N_{\mathrm{TS}})$}{   
                \emph{\# begin the macroscale NR loop (index $j$)} \\
                \For{$(j \gets 1$ \KwTo $N_{\mathrm{NR}}^{\mathrm{M}} )$}{
                    \emph{\# parallel updating of the homogenized law (index $i$)} \\
                    \For{$(i \gets 1$ \KwTo $N_{\mathrm{GP}})$}{
                        Read the mesoscale fields $\bb_{\mathrm{m}}^{(l)}$, \\
                        update $\partial \bH_{\mathrm{M}}^{(l)}/\partial \bb_{\mathrm{M}}^{(l)}$ using \eqref{eq:Homogenized-Law-WR-FE-HMM}, \\
                        upscale the law $\bH_{\mathrm{M}}^{(l)}, \partial \boldsymbol{\mathcal{H}}_{\mathrm{M}}^{(l)}/ \partial \bb_{\mathrm{M}}^{(l)}$,
                    }
                    assemble the matrix and solve,\newline
                } 
            }
        }
    }
}
\caption{Pseudocode for the waveform relaxation FE-HMM.}
\label{alg:WR-FE-HMM}
\end{algorithm}

The implementation of the waveform relaxation method is illustrated in Algorithm
\ref{alg:WR-FE-HMM}.  We propose a Gau{\ss}--Seidel type scheme between the macro-
and the mesoscale, where the mesoscale is solved in parallel.  It
starts with a loop over time windows and for each time window, a waveform
relaxation loop involving weakly coupled macroscale and mesoscale problems is
carried out.  The mesoscale are solved in parallel on the time interval
$\mathcal{I}_n$ using the macroscale sources from the previous WR iteration
$\bah_{\mathrm{M}}^{(l-1)}$ in the mesoscale problems. Mesoscale solutions are
then stored for later use in the evaluation of the homogenized magnetic field
and of the Jacobian. Then the macroscale problem is solved on $\mathcal{I}_n$
until convergence. The resolution involves a time discretization that leads to a
nonlinear problem that is solved using the Newton--Raphson method with the
Jacobian evaluated using the previously stored mesoscale fields.

Ultimately, the solution of the entire multiscale problem is obtained as follows 
(only superscripts of the waveform relaxation and the time window are involved,
i.e., the fields involved are $\ba_{\mathrm{c}}^{(l, n)}$ and $\ba_{\mathrm{M}}^{(l, n)}$): 
the macroscale field $\ba_\mathrm{M}^{(0, n)}$ is initialized with constant extrapolation.
The mesoscale problem can then be solved for the first iteration, i.e., one obtains 
$\ba_{\mathrm{c}}^{(1, n)}$.
This is then used for successive waveform relaxation iterations 
\begin{equation*}
\begin{aligned}
&\ba_\mathrm{M}^{(0, 1)}&
&\rightarrow&
&\ba_\mathrm{c}^{(1, 1)} &
&\rightarrow&
&\ba_\mathrm{M}^{(1, 1)} &
&\rightarrow&
&\ba_\mathrm{c}^{(2, 1)} &
&\rightarrow&
&\ldots&
&\rightarrow&
&\ba_\mathrm{M}^{(N_{\mathrm{WR}}, 1)}&
\\
&\ba_\mathrm{M}^{(0, 2)} &
&\rightarrow&
&\ba_\mathrm{c}^{(1, 2)} &
&\rightarrow&
&\ba_\mathrm{M}^{(1, 2)} &
&\rightarrow&
&\ba_\mathrm{c}^{(2, 2)} &
&\rightarrow&
&\ldots&
&\rightarrow&
&\ba_\mathrm{M}^{(N_{\mathrm{WR}}, 2)}, &
\\
&&
&&
&&
&&
&&
&\vdots&
&&
&&
&&
&&
&&
\\
&\ba_\mathrm{M}^{(0, N_{\mathrm{TW}})} &
&\rightarrow&
&\ba_\mathrm{c}^{(1, N_{\mathrm{TW}})}&
&\rightarrow&
&\ba_\mathrm{M}^{(1, N_{\mathrm{TW}})}&
&\rightarrow&
&\ba_\mathrm{c}^{(2, N_{\mathrm{TW}})} &
&\rightarrow&
&\ldots&
&\rightarrow&
&\ba_\mathrm{M}^{(N_{\mathrm{WR}}, N_{\mathrm{TW}})}.&
\end{aligned}
\end{equation*}

In addition to the flexible use of 
different FE bases and meshes at both scales, this approach also provides a 
natural setting for the use of different integrators and time step sizes. 
Communication costs can also be reduced in the case of parallel computations. 
As a drawback, the number of iterations for solving both the macroscale and the 
mesoscale problems may increase. 
A rigorous convergence analysis requires a structural
analysis of the coupled problem \eqref{eq:newton-macro-semidiscrete}--\eqref{eq:newton-meso-semidiscrete}.
In particular, the DAE index \cite{hairer_2010} must be known to guarantee convergence, c.f. \cite{Crow_1994}. 
This analysis is beyond the scope of this paper. 
However, the total costs for the monolithic and the waveform relaxation methods are estimated in the next Section.

\section{Estimation of the computational cost}
\label{sec:computational-cost}
In this section, we evaluate and compare total computational costs of the monolithic and 
the waveform relaxation algorithms. The total cost comprises the computational 
and the communication costs, which differ from one algorithm to the other.
We use $C^{\mathrm{m}}_{\mathrm{sol}}, C_{\mathrm{com}}, C^{\mathrm{M}}_{\mathrm{ass}}$ 
and $C^{\mathrm{M}}_{\mathrm{sol}}$ for the one time step costs for the mesoscale computations (including Newton iterations), 
the mesoscale--macroscale communications, the macroscale assembling and the macrocale resolution, respectively.
$C^{\mathrm{m}}_{\mathrm{sol}}$ is the most expensive and dominant operation. 
The total number of time steps is denoted by $N_\mathrm{TS}$ and the number of time windows 
$N_\mathrm{TW}$, such that the computational costs for a time window with 
$N_\mathrm{TS}/N_\mathrm{TW}$ time steps are roughly $\frac{N_\mathrm{TS}}{N_\mathrm{TW}}C^{\mathrm{m}}_{\mathrm{sol}}$. 

\subsection{Monolithic HMM}
\label{subsec:mono-cost}

The total cost of the monolithic algorithm is given by
\begin{align}
C_{\mathrm{Mono}} &= C^{\mathrm{m}}_{\mathrm{Mono}} + C^{\mathrm{M}}_{\mathrm{Mono}},
\label{eq:monolithic-cost-total}
\intertext{and has two contributions: the mesoscale contribution}
C^{\mathrm{m}}_{\mathrm{Mono}} 
&= 
N_{\mathrm{TS}} \, N_{\mathrm{NR}}^{\mathrm{M}} \, N_{\mathrm{GP}} (N^{\mathrm{m}}_\mathrm{dim} C^{\mathrm{m}}_{\mathrm{sol}} + C_{\mathrm{com}}), 
\label{eq:monolithic-cost-total-meso}
\intertext{and the macroscale contribution}
C^{\mathrm{M}}_{\mathrm{Mono}} 
&= 
N_{\mathrm{TS}} \, N_{\mathrm{NR}}^{\mathrm{M}} \, (C^{\mathrm{M}}_{\mathrm{ass}} + C^{\mathrm{M}}_{\mathrm{sol}}),
\label{eq:monolithic-cost-total-macro}
\end{align} 
where $N_{\mathrm{TS}}$ is the total number of time steps, $N_{\mathrm{NR}}^{\mathrm{M}}$ and $N_{\mathrm{NR}}^{\mathrm{m}}$ 
are the average numbers of Newton--Raphson iterations for macroscale and mesoscale problems to converge, $N^{\mathrm{m}}_\mathrm{dim}$ is the number of problems solved to approximate the Jacobian and the other $N_\star$ are defined in Table \ref{tab:loops}.

\medskip

The total cost in \eqref{eq:monolithic-cost-total} can be understood from the 
Algorithms \ref{alg:FE-HMM-Macro} and \ref{alg:FE-HMM-Meso}. The overall problem
is discretized in $N_{\mathrm{TS}}$ macroscale time steps and a macroscale nonlinear system 
is solved for each time step using the Newton--Raphson scheme. Therefore, 
$N_{\mathrm{NR}}^{\mathrm{M}}$ nonlinear (Newton--Raphson) iterations are performed 
for each time step. Each of these iterations involves the evaluation of 
the Jacobian at $N_{\mathrm{GP}}$ Gau{\ss} points by solving $N^{\mathrm{m}}_\mathrm{dim}=3$ or $4$ mesoscale problems.
The macroscale linear system is then assembled and solved.
The mesoscale cost $C^{\mathrm{m}}_{\mathrm{sol}}$ involves the resolution of nonlinear mesoscale 
problems over one time step. $N_{\mathrm{NR}}^{\mathrm{m}}$ nonlinear iterations 
are needed for each time step at the mesoscale and 
$2 N_{\mathrm{TS}} N_{\mathrm{NR}}^{\mathrm{M}} \, N_{\mathrm{GP}}$
communications involving the parallel transfer of small chunks of information are needed for the overall time interval. 
The macroscale assembling and resolution costs can be disregarded
with respect to the mesosclae costs, therefore
\begin{equation*}
C_{\mathrm{Mono}} \approx C^{\mathrm{m}}_{\mathrm{Mono}}.
\label{eq:monolithic-cost-total-1}
\end{equation*} 

\subsection{Waveform relaxation HMM}
\label{subsec:wr-hmm-cost}
The total cost for the WR algorithm is given by:
\begin{equation}
C_{\mathrm{WR}} = C^{\mathrm{m}}_{\mathrm{WR}} + C^{\mathrm{M}}_{\mathrm{WR}}.
\label{eq:wr-cost-total}
\end{equation} 
In the general case where $N_{\mathrm{TW}}$ time windows 
$\mathcal{I}_n = (t_n, t_{n+1}]$ are used. The two contributions in \eqref{eq:wr-cost-total}
are the mesoscale cost 
\begin{align}
C^{\mathrm{m}}_{\mathrm{WR}} 
&= 
N_{\mathrm{TS}} \, N_{\mathrm{WR}} \, N_{\mathrm{GP}} \left(C^{\mathrm{m}}_{\mathrm{sol}} + \frac{N_{\mathrm{TW}}}{N_{\mathrm{TS}}}C_{\mathrm{com}}\right)
\\
&= N_{\mathrm{TS}} \, N_{\mathrm{WR}} \, N_{\mathrm{GP}} C^{\mathrm{m}}_{\mathrm{sol}} + N_{\mathrm{TW}} \, N_{\mathrm{WR}} \, N_{\mathrm{GP}}C_{\mathrm{com}}, 
\label{eq:wr-cost-total-meso}
\intertext{and the macroscale cost}
C^{\mathrm{M}}_{\mathrm{WR}} 
&= N_{\mathrm{TW}} \, N_{\mathrm{WR}} \, N_{\mathrm{NR}}^{\mathrm{M}} \left( N_{\mathrm{GP}} C^{\mathrm{m}}_{\mathrm{jac}} 
+ C^{\mathrm{M}}_{\mathrm{ass}} + C^{\mathrm{M}}_{\mathrm{sol}}\right). 
\label{eq:wr-cost-total-macro}
\end{align}  
Additionally, $C^{\mathrm{m}}_{\mathrm{jac}}$ is the 
cost for reading a pre-stored mesoscale field map and evaluating the Jacobian 
for all time steps of the time window using \eqref{eq:Homogenized-Law-WR-FE-HMM}
for each time step, and $N_{\mathrm{NR}}^{\mathrm{M}}$ is the average 
number of Newton--Raphson iterations for macroscale problems to converge. 
These costs can be made small compared to the mesoscale computational cost 
$C^{\mathrm{m}}_{\mathrm{dim}}$ and the communication cost $C_{\mathrm{com}}$
by the use of a smart implementation.

The total cost in \eqref{eq:wr-cost-total} can be understood from the 
Algorithm \ref{alg:WR-FE-HMM}. The overall problem
is discretized and solved on $N_{\mathrm{TW}}$ time windows and for each time window
$\mathcal{I}_n$, a waveform relaxation loop involving $N_{\mathrm{WR}}$ WR 
iterations during which mesoscale problems are solved and stored.
The communication cost involves the transfer of $ \frac{N_\mathrm{TS}}{N_\mathrm{TW}} \,  N_{\mathrm{GP}} \,  N_{\mathrm{NR}}$ 
communications of bigger chunks of informations for each time window.

The nonlinear macroscale problem is solved using the Newton--Raphson
scheme. Therefore, $N_{\mathrm{NR}}^{\mathrm{M}}$ nonlinear iterations are carried for each
time step and for each Newton--Raphson iteration, the Jacobian is computed for
$N_{\mathrm{GP}}$ Gau{\ss} points. This is done by reading mesoscale field maps
for each Gau{\ss} point and then evaluating the homogenized law using
\eqref{eq:Homogenized-Law-WR-FE-HMM}.  The macroscale linear system is then
assembled and solved.  The reading of mesoscale fields maps and the update of
the homogenized law are one of the leverage for accelerating computations in the
context of the waveform relaxation method.

Neglecting the macroscale assembling and resolution costs, equation \eqref{eq:wr-cost-total}
can be approximated by
\begin{equation*}
C_{\mathrm{WR}} \approx N_{\mathrm{TS}} \, N_{\mathrm{WR}} \, N_{\mathrm{GP}} 
\left(C^{\mathrm{m}}_{\mathrm{sol}} + 
\frac{N_{\mathrm{TW}} }{N_{\mathrm{TS}} } \left(N_{\mathrm{NR}}^{\mathrm{M}} C^{\mathrm{m}}_{\mathrm{jac}} + C_{\mathrm{com}}\right)
\right)
,
\label{eq:wr-cost-total-1}
\end{equation*} 
where $N_{\mathrm{TS}}$ is the total number of time steps. 
The following theorem allows to compare computational costs for the 
monolithic and the waveform relaxation approaches.
\begin{thm}
The computational costs for the monolithic and the waveform relaxation methods 
are respectively given by the following approximations:
\begin{align}
C_{\mathrm{Mono}} 
&\approx N_{\mathrm{TS}} N_{\mathrm{NR}}^{\mathrm{M}} N_{\mathrm{GP}} \left(
\, N^{\mathrm{m}}_{\mathrm{dim}} \, C^{\mathrm{m}}_{\mathrm{sol}} + C_{\mathrm{com}}\right),  
\label{eq:monolithic-cost-total-2}
\intertext{and}
C_{\mathrm{WR}} 
&\approx N_{\mathrm{TS}} \, N_{\mathrm{WR}} \, N_{\mathrm{GP}} 
\left(C^{\mathrm{m}}_{\mathrm{sol}} + 
\frac{N_{\mathrm{TW}} }{N_{\mathrm{TS}} } \left(N_{\mathrm{NR}}^{\mathrm{M}} C^{\mathrm{m}}_{\mathrm{jac}} + C_{\mathrm{com}}\right)
\right).
\label{eq:wr-cost-total-meso-2}
\end{align}  
\end{thm}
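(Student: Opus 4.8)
The plan is to assemble both approximations directly from the cost decompositions already derived in this section; the only genuinely non-routine ingredient is the ordering of the elementary cost contributions. Throughout I would use that $C^{\mathrm{m}}_{\mathrm{sol}}$ — the cost of one nonlinear finite element solve of a (three-dimensional) mesoscale problem over one time step — is the dominant operation and in particular dwarfs the macroscale assembly cost $C^{\mathrm{M}}_{\mathrm{ass}}$, the macroscale linear solve cost $C^{\mathrm{M}}_{\mathrm{sol}}$, the cost $C^{\mathrm{m}}_{\mathrm{jac}}$ of reading a pre-stored mesoscale field map and evaluating the closed-form homogenized law, and the per-exchange communication cost $C_{\mathrm{com}}$.

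\textbf{Monolithic HMM.} First I would recall the decomposition \eqref{eq:monolithic-cost-total}, $C_{\mathrm{Mono}} = C^{\mathrm{m}}_{\mathrm{Mono}} + C^{\mathrm{M}}_{\mathrm{Mono}}$, and substitute the mesoscale contribution \eqref{eq:monolithic-cost-total-meso} and the macroscale contribution \eqref{eq:monolithic-cost-total-macro}. Since each of the $N_{\mathrm{TS}}\,N_{\mathrm{NR}}^{\mathrm{M}}$ macroscale Newton steps triggers $N_{\mathrm{GP}}$ Gau{\ss}-point evaluations, each costing $N^{\mathrm{m}}_{\mathrm{dim}}\,C^{\mathrm{m}}_{\mathrm{sol}} + C_{\mathrm{com}}$, while the accompanying macroscale work per Newton step only costs $C^{\mathrm{M}}_{\mathrm{ass}} + C^{\mathrm{M}}_{\mathrm{sol}}$, the cost hierarchy gives $C^{\mathrm{M}}_{\mathrm{Mono}} \ll C^{\mathrm{m}}_{\mathrm{Mono}}$, hence $C_{\mathrm{Mono}} \approx C^{\mathrm{m}}_{\mathrm{Mono}}$, which is exactly \eqref{eq:monolithic-cost-total-2}.

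\textbf{Waveform relaxation HMM.} Analogously I would start from \eqref{eq:wr-cost-total}, substitute the mesoscale cost \eqref{eq:wr-cost-total-meso} and the macroscale cost \eqref{eq:wr-cost-total-macro}, and drop $C^{\mathrm{M}}_{\mathrm{ass}} + C^{\mathrm{M}}_{\mathrm{sol}}$ from the macroscale contribution by the same reasoning, retaining only the Jacobian-update term $N_{\mathrm{TW}}\,N_{\mathrm{WR}}\,N_{\mathrm{NR}}^{\mathrm{M}}\,N_{\mathrm{GP}}\,C^{\mathrm{m}}_{\mathrm{jac}}$. Adding this to $C^{\mathrm{m}}_{\mathrm{WR}} = N_{\mathrm{TS}}\,N_{\mathrm{WR}}\,N_{\mathrm{GP}}\,C^{\mathrm{m}}_{\mathrm{sol}} + N_{\mathrm{TW}}\,N_{\mathrm{WR}}\,N_{\mathrm{GP}}\,C_{\mathrm{com}}$ and factoring out $N_{\mathrm{TS}}\,N_{\mathrm{WR}}\,N_{\mathrm{GP}}$ converts the $N_{\mathrm{TW}}$-weighted terms into $\frac{N_{\mathrm{TW}}}{N_{\mathrm{TS}}}\bigl(N_{\mathrm{NR}}^{\mathrm{M}}\,C^{\mathrm{m}}_{\mathrm{jac}} + C_{\mathrm{com}}\bigr)$, yielding \eqref{eq:wr-cost-total-meso-2}.

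\textbf{Main obstacle.} There is no analytic difficulty; the delicate point — and the only place where the statement is an \emph{approximation} rather than an identity — is justifying the cost hierarchy $C^{\mathrm{m}}_{\mathrm{sol}} \gg C^{\mathrm{M}}_{\mathrm{ass}},\,C^{\mathrm{M}}_{\mathrm{sol}},\,C^{\mathrm{m}}_{\mathrm{jac}},\,C_{\mathrm{com}}$. This rests on the modelling assumptions that the macroscale mesh is coarse (so its assembly and linear solve are cheap), that reading a stored mesoscale field and evaluating a closed-form homogenized law costs far less than a nonlinear finite element solve, and that only small chunks of data are exchanged; I would state these hypotheses explicitly, since \eqref{eq:monolithic-cost-total-2}--\eqref{eq:wr-cost-total-meso-2} are only meaningful under them.
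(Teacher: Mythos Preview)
Your proposal is correct and follows essentially the same approach as the paper: the paper's own proof is a one-line reference to the cost decompositions already derived in Sections~\ref{subsec:mono-cost} and~\ref{subsec:wr-hmm-cost}, and you have simply spelled those steps out explicitly, including the neglect of $C^{\mathrm{M}}_{\mathrm{ass}}+C^{\mathrm{M}}_{\mathrm{sol}}$ and the subsequent factorization. Your added discussion of the underlying cost hierarchy is more careful than the paper's terse justification but does not deviate from it.
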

\begin{proof}
The theorem results from the developments of Sections \ref{subsec:mono-cost} and 
\ref{subsec:wr-hmm-cost}. The high level of parallelization as explained in 
Algorithms \ref{alg:FE-HMM-Macro}-\ref{alg:FE-HMM-Meso} and \ref{alg:WR-FE-HMM} 
results from the independence of mesoscale problems.
\end{proof}
\noindent Moreover, the two approaches can easily be parallelized.

\begin{rem}
The computational cost of the waveform relaxation method can be decreased
by minimizing the cost related to the reading of the mesoscale fields and the update 
of the homogenized law.
\end{rem}
\noindent Assume that there exists
$\kappa \in (0, 1)$ 
such that 
$$ \displaystyle{\frac{N_{\mathrm{TW}}}{N_{\mathrm{TS}}}}
N_{\mathrm{NR}}^{\mathrm{M}} C^{\mathrm{m}}_{\mathrm{jac}} = 
\kappa (C^{\mathrm{m}}_{\mathrm{sol}} + \displaystyle{\frac{N_{\mathrm{TW}}}{N_{\mathrm{TS}}}} C_{\mathrm{com}})$$
which is a reasonable assumption because the cost due to the computation of 
the material law using the pre-stored mesoscale maps on the whole time window  
is small compared to the cost due to the computation of mesoscale problems on the same time window
and the communication. Then the relationship
\begin{align*}
  \nonumber
N_{\mathrm{WR}} \left(C^{\mathrm{m}}_{\mathrm{sol}} + 
\frac{N_{\mathrm{TW}} }{N_{\mathrm{TS}} } \left(N_{\mathrm{NR}}^{\mathrm{M}} C^{\mathrm{m}}_{\mathrm{jac}} + C_{\mathrm{com}}\right)
\right) 
&= N_{\mathrm{WR}} \left( (1 + \kappa) C^{\mathrm{m}}_{\mathrm{sol}} + \frac{N_{\mathrm{TW}} }{N_{\mathrm{TS}} } \left(1 + \kappa\right) C_{\mathrm{com}} \right)
\\
&< 
N_{\mathrm{NR}}^{\mathrm{M}} \left(N^{\mathrm{m}}_{\mathrm{dim}} \, C^{\mathrm{m}}_{\mathrm{sol}} + C_{\mathrm{com}} \right)
\label{eq:comparison-mono-wr-2-indi}
\end{align*} 
between \eqref{eq:monolithic-cost-total-2} and \eqref{eq:wr-cost-total-meso-2} 
shows that the waveform relaxation method is more efficient
if 
\begin{equation}
  N_{\mathrm{WR}} < \frac{N^{\mathrm{m}}_{\mathrm{dim}}}{(1 + \kappa)} N_{\mathrm{NR}}^{\mathrm{M}}  \label{eq:comparison-mono-wr-2}
\end{equation}\noindent and each time window consists of at least $N_{\mathrm{TS}} > 2$ time steps which is a rather technical assumption.
As can be seen from relation \eqref{eq:comparison-mono-wr-2}, reducing the number
of time windows ($N_{\mathrm{TW}}$) reduces the communication cost between the mesoscale 
and macroscale problems. Additionally, the reduction of cost due to the evaluation of the Jacobian minimizes the overall cost of the waveform relaxation method.

\section{Application}
\label{sec:applications}
We use a soft magnetic composite (SMC) material to test the ideas developed in the previous sections.
An idealized 2D periodic SMC (with $20 \, \times \, 20$ grains) surrounded 
by an inductor is considered. 

For the first series of numerical tests we use the SMC structure depicted 
in Figure \ref{fig:smc_grains_a-v} (only $10 \, \times \, 10$ grains are shown).
\begin{figure}
\centering
\scalebox{0.72}{\begingroup  \makeatletter  \providecommand\color[2][]{    \errmessage{(Inkscape) Color is used for the text in Inkscape, but the package 'color.sty' is not loaded}    \renewcommand\color[2][]{}  }  \providecommand\transparent[1]{    \errmessage{(Inkscape) Transparency is used (non-zero) for the text in Inkscape, but the package 'transparent.sty' is not loaded}    \renewcommand\transparent[1]{}  }  \providecommand\rotatebox[2]{#2}  \ifx\svgwidth\undefined    \setlength{\unitlength}{389.62924805bp}    \ifx\svgscale\undefined      \relax    \else      \setlength{\unitlength}{\unitlength * \real{\svgscale}}    \fi  \else    \setlength{\unitlength}{\svgwidth}  \fi  \global\let\svgwidth\undefined  \global\let\svgscale\undefined  \makeatother  \begin{picture}(1,0.77662753)    \put(0,0){\includegraphics[width=\unitlength]{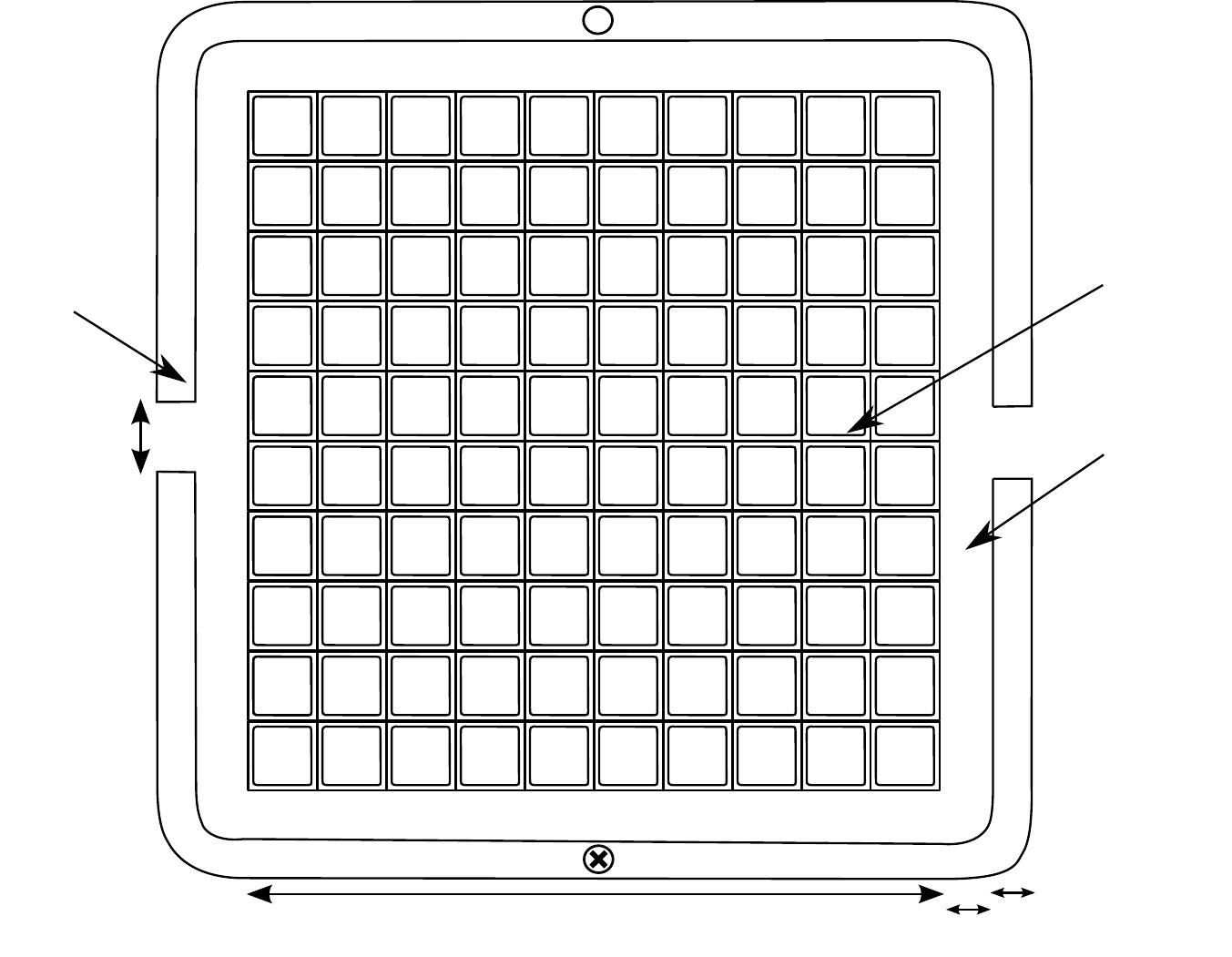}}    \put(-0.00493714,0.53689763){\color[rgb]{0,0,0}\makebox(0,0)[lb]{\smash{Inductor}}}    \put(0.91006439,0.5367161){\color[rgb]{0,0,0}\makebox(0,0)[lb]{\smash{SMC}}}    \put(0.91006439,0.40616586){\color[rgb]{0,0,0}\makebox(0,0)[lb]{\smash{Air}}}    \put(0.48545043,0.02269974){\color[rgb]{0,0,0}\makebox(0,0)[lb]{\smash{$L$}}}    \put(0.77571972,0.00906184){\color[rgb]{0,0,0}\makebox(0,0)[lb]{\smash{$e_a$}}}    \put(0.81165943,0.02269974){\color[rgb]{0,0,0}\makebox(0,0)[lb]{\smash{$e_i$}}}    \put(0.04891615,0.41437492){\color[rgb]{0,0,0}\makebox(0,0)[lb]{\smash{$e_{gap}$}}}    \put(0.47877082,0.75656306){\color[rgb]{0,0,0}\makebox(0,0)[lb]{\smash{.}}}    \put(0.50187631,0.7495362){\color[rgb]{0,0,0}\makebox(0,0)[lb]{\smash{$\bj$}}}    \put(0.50390869,0.06720575){\color[rgb]{0,0,0}\makebox(0,0)[lb]{\smash{$\bj$}}}  \end{picture}\endgroup }
\caption{Soft magnetic composite two-dimensional used geometry. 
Two opposite source current are imposed in the top and bottom inductors. 
The lengths are given by $L = 1000 \, \mu$m, $e_a = 150 \, \sqrt{2}/2 \, \mu$m, 
$e_i = 100 \, \mu$m and $e_{gap} = 100 \, \mu$m. Only 100 grains out of 400 are
drawn on the image.}
\label{fig:smc_grains_a-v}
\end{figure}
\begin{figure}
\centering
\scalebox{0.55}{\begingroup  \makeatletter  \providecommand\color[2][]{    \errmessage{(Inkscape) Color is used for the text in Inkscape, but the package 'color.sty' is not loaded}    \renewcommand\color[2][]{}  }  \providecommand\transparent[1]{    \errmessage{(Inkscape) Transparency is used (non-zero) for the text in Inkscape, but the package 'transparent.sty' is not loaded}    \renewcommand\transparent[1]{}  }  \providecommand\rotatebox[2]{#2}  \ifx\svgwidth\undefined    \setlength{\unitlength}{401.53049316bp}    \ifx\svgscale\undefined      \relax    \else      \setlength{\unitlength}{\unitlength * \real{\svgscale}}    \fi  \else    \setlength{\unitlength}{\svgwidth}  \fi  \global\let\svgwidth\undefined  \global\let\svgscale\undefined  \makeatother  \begin{picture}(1,0.96509316)    \put(0,0){\includegraphics[width=\unitlength]{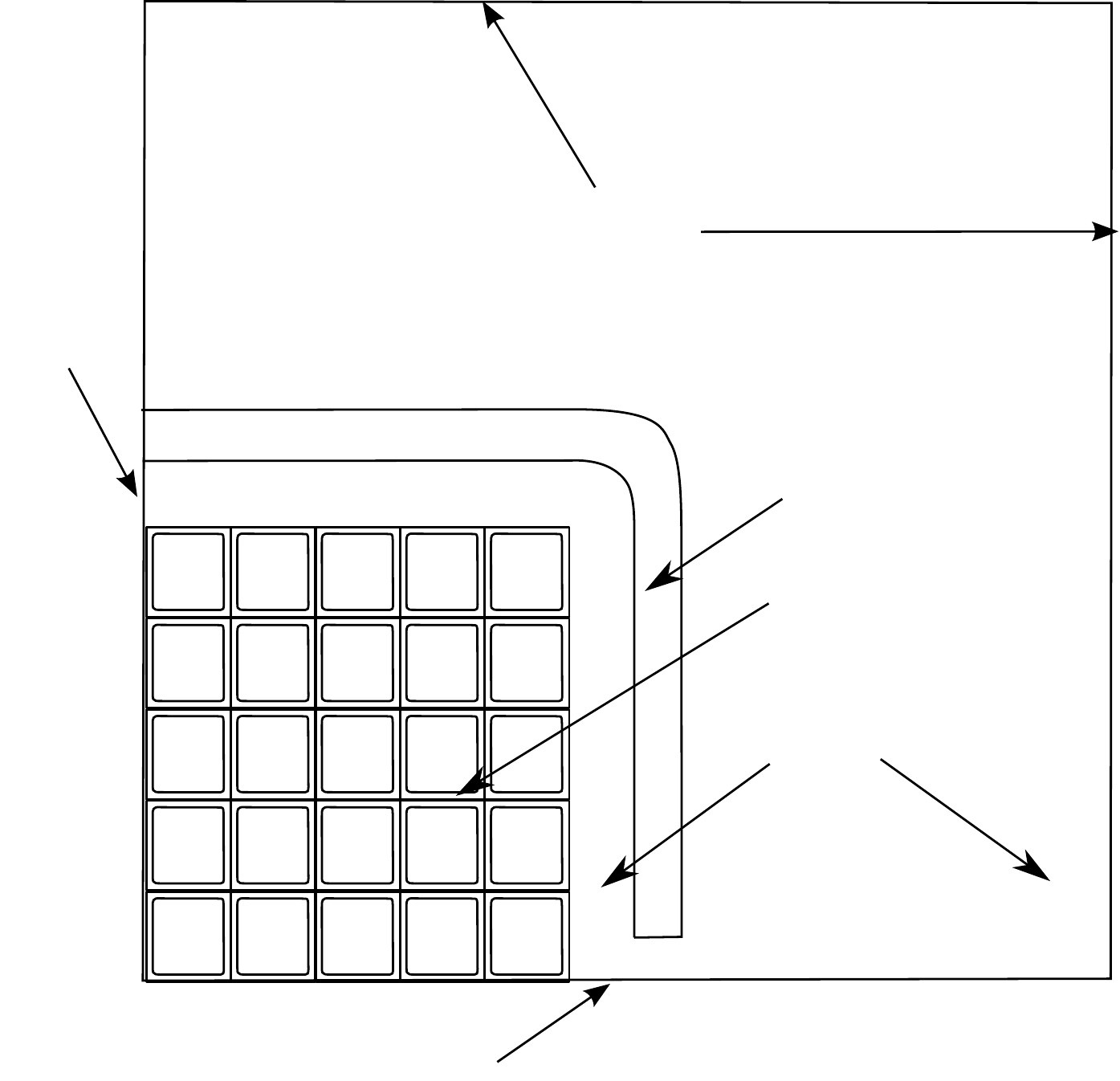}}    \put(0.70655616,0.53562561){\color[rgb]{0,0,0}\makebox(0,0)[lb]{\smash{Inductor}}}    \put(0.70655616,0.42376186){\color[rgb]{0,0,0}\makebox(0,0)[lb]{\smash{SMC}}}    \put(0.69704325,0.28086573){\color[rgb]{0,0,0}\makebox(0,0)[lb]{\smash{Air}}}    \put(0.01776985,0.64442937){\color[rgb]{0,0,0}\makebox(0,0)[lb]{\smash{$\Gamma_v$}}}    \put(0.53532352,0.74649089){\color[rgb]{0,0,0}\makebox(0,0)[lb]{\smash{$\Gamma_{\mathrm{inf}}$}}}    \put(0.38609155,0.01341198){\color[rgb]{0,0,0}\makebox(0,0)[lb]{\smash{$\Gamma_h$}}}  \end{picture}\endgroup }
\scalebox{0.55}{\begingroup  \makeatletter  \providecommand\color[2][]{    \errmessage{(Inkscape) Color is used for the text in Inkscape, but the package 'color.sty' is not loaded}    \renewcommand\color[2][]{}  }  \providecommand\transparent[1]{    \errmessage{(Inkscape) Transparency is used (non-zero) for the text in Inkscape, but the package 'transparent.sty' is not loaded}    \renewcommand\transparent[1]{}  }  \providecommand\rotatebox[2]{#2}  \ifx\svgwidth\undefined    \setlength{\unitlength}{393.53049316bp}    \ifx\svgscale\undefined      \relax    \else      \setlength{\unitlength}{\unitlength * \real{\svgscale}}    \fi  \else    \setlength{\unitlength}{\svgwidth}  \fi  \global\let\svgwidth\undefined  \global\let\svgscale\undefined  \makeatother  \begin{picture}(1,0.98471234)    \put(0,0){\includegraphics[width=\unitlength]{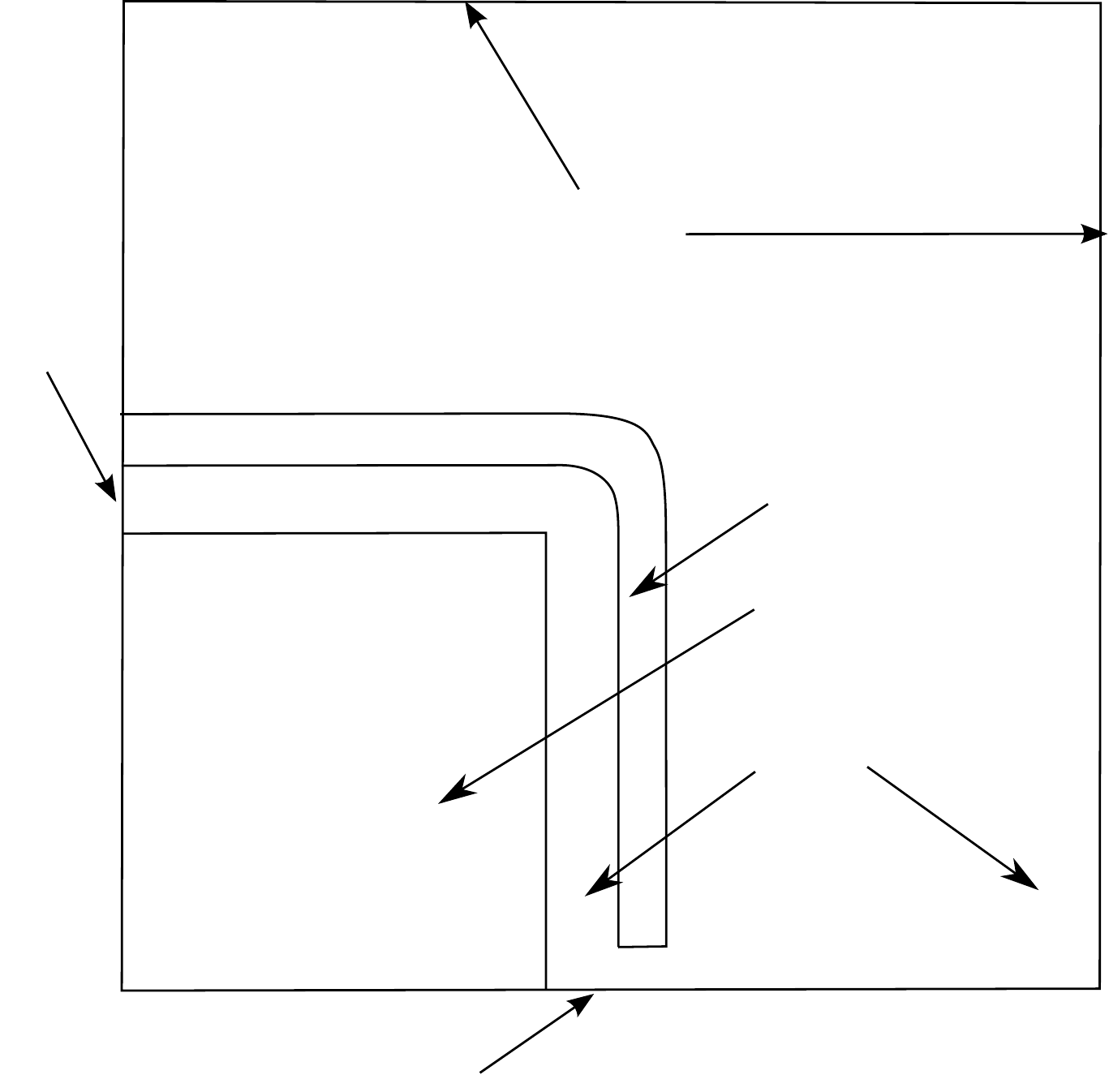}}    \put(0.70059081,0.54651424){\color[rgb]{0,0,0}\makebox(0,0)[lb]{\smash{Inductor}}}    \put(0.70059081,0.43237643){\color[rgb]{0,0,0}\makebox(0,0)[lb]{\smash{SMC}}}    \put(0.69088452,0.28657539){\color[rgb]{0,0,0}\makebox(0,0)[lb]{\smash{Air}}}    \put(-0.00219769,0.65752985){\color[rgb]{0,0,0}\makebox(0,0)[lb]{\smash{$\Gamma_v$}}}    \put(0.52587722,0.76166615){\color[rgb]{0,0,0}\makebox(0,0)[lb]{\smash{$\Gamma_{\mathrm{inf}}$}}}    \put(0.37361155,0.01368462){\color[rgb]{0,0,0}\makebox(0,0)[lb]{\smash{$\Gamma_h$}}}  \end{picture}\endgroup }
\caption{Geometry used for computations. Only a quarter of the geometry is used 
thanks to the symmetries. 
Left: reference geometry. Only 25 grains out of 100 are drawn on the image. 
Right: homogenized geometry.}
\label{fig:smc_grains_a-v_quarter}
\end{figure}
The geometry has been chosen such that the vector potential formulation 
$\bb = \Curl[]{\ba}$ as described in Section \ref{sec:mqs-problem} with 
$\ba = (0, 0, a_z)$ can be used.
The magnetic flux density $\bb = (b_x, b_y, 0)$ lives in the $xy$-plane. 
Only a quarter of the structure is considered for numerical computations thanks 
to the symmetry (see Figure \ref{fig:smc_grains_a-v_quarter} - left
for the reference geometry and Figure \ref{fig:smc_grains_a-v_quarter} - right for the 
geometry used for the homogenized problem).
In both cases, the following boundary conditions are imposed on 
$\Gamma_{\mathrm{inf}}, \Gamma_{h}$ and $\Gamma_{v}$:
\begin{equation*}
(\bn \cdot \bb) |_{\Gamma_{\mathrm{inf}}} = 0 \, \, \Leftarrow \, \, (\bn \times \ba) |_{\Gamma_{\mathrm{inf}}} = \boldsymbol{0}, \quad
(\bn \cdot \bb) |_{\Gamma_{\mathrm{h}}} = 0 \, \, \Leftarrow \, \,  (\bn \times \ba) |_{\Gamma_{\mathrm{h}}} = \boldsymbol{0}, \quad
(\bn \times \bh) |_{\Gamma_{\mathrm{v}}} = \boldsymbol{0}.
\label{eq:chap_results_BC}
\end{equation*}

Using Amp\`{e}re’s equation (\ref{eq:ampere-faraday-gauss}\,a),
the source current $\bj_{\mathrm{s}}$ must be imposed perpendicular to the $xy$-plane 
$\bj_{\mathrm{s}} = (0, 0, j_{\mathrm{s}})$ with $j_{\mathrm{s}} = j_{s0} s(t) = j_{s0} \sin(2 \pi f t)$. We consider the operating frequency $f = 50$ kHz which 
corresponds to $\lambda = 6000$m). The wavelength of the source is much larger
compared to the length of the structure ($\simeq 500 \mu m$) so that the 
assumption of a magnetoquasistatic problem can be made.

We consider isotropic materials and therefore, the magnetic field $\bh$ has only 
$xy$ components. We use the same material properties as those used in \cite{niyonzima-13}:
the insulation material is linear isotropic (with $\mu_r = 1$ and $\sigma
=0$). The conductor has an isotropic electric conductivity $\sigma = 5$\,MS and 
is also governed by the following nonlinear magnetic law \cite{Brauer_1981aa}: 
\begin{equation*}
\bH(\bb^{\varepsilon}) =  \Big( \alpha + \beta \, \exp(\gamma
||\bb^{\varepsilon}||^2) \Big)\,   \bb^{\varepsilon}
\end{equation*}
with $\alpha=  388$, $\beta = 0.3774$ and $\gamma = 2.97$.
The reference solution is obtained by solving a FE problem on an 
extremely fine mesh (347,324 elements) of the whole SMC structure. The mesoscale 
problems are solved on a square elementary cell meshed with (4,215 elements).
Results obtained using the newly developed waveform relaxation (WR) approach 
subscripted WR are compared to the reference results (subscripted ``Ref'') obtained 
solving the reference problem \eqref{eq:weakform_magdyn_multiscale} on a very 
fine mesh and results of the monolithic approach (subscripted``Mono'') obtained 
solving problem \eqref{eq:classical_macro} and \eqref{eq:classical_meso}.

Quantities of interest (global quantities and errors) are defined and used for numerical validation. 
The global quantities are the reference, the monolithic and the WR eddy currents losses:
\begin{equation*}
\tau \mathrm{P}_{\mathrm{Ref}}(t)    = q(\ba^{\varepsilon}), \quad
\tau \mathrm{P}_{\mathrm{mono}}(t)   = q(\ba_m), \quad
\tau \mathrm{P}_{\mathrm{WR}}^{l}(t) = q(\ba_m^{l}),
\label{eq:ecl_all}
\end{equation*}
where the functionals $q$ is defined as:
\[ q(\bu) =
  \begin{cases}
    \displaystyle \int_{\Omega_{\mathrm{c}}} (\sigma |\partial_t \bu^{\varepsilon}(\bx, t)|) \, \mathrm{d} x 
    & \quad \text{if } \bu \text{ is the reference solution}
    \\
    \displaystyle \int_{\Omega} \left( \frac{1}{|\Omega_{\mathrm{m}}|} \int_{\Omega_{\mathrm{mc}}} 
    (\sigma |\partial_t \bu_{\mathrm{m}}(\bx, \by, t)|^2) \, \mathrm{d} y \right) \, \mathrm{d} x  
    & \quad \text{if } \bu \text{ is the solution of the multiscale method}
    \\
  \end{cases}
\]
Equivalent quantities can be defined in terms of the magnetic energy and the magnetic power.
Two types of errors are defined:
the relative error on global quantities
\begin{equation}
\mathrm{Err}_{\tau \mathrm{P}}^{l} = \delta_{\mathrm{rel}}(\tau \mathrm{P}_{\mathrm{WR}}^{l}, \tau \mathrm{P}_{\mathrm{Mono}}), \quad 
\mathrm{Err}_{\mathrm{Wmag}}^{l}   = \delta_{\mathrm{rel}}(\mathrm{Wmag}_{\mathrm{WR}}^{l}  , \mathrm{Wmag}_{\mathrm{Mono}}  ), 
\label{eq:error-jl-mage}
\end{equation}
and the relative error on the fields $\bu_{\mathrm{M}}$
\begin{equation}
\mathrm{Err}_{\bu}^{(l)} = \bar{\delta}_{\mathrm{rel}}(\bu_{\mathrm{M}}) 
\label{eq:error-fields}
\end{equation}
where $\bu$ stand for the fields $\bb_{\mathrm{M}}, \partial_t \ba_{\mathrm{M}}$ and $\partial_t \bb_{\mathrm{M}}$. The functions 
$\delta_{\mathrm{rel}}$ and $\bar{\delta}_{\mathrm{rel}}$ are defined by:
\begin{equation*}
\delta_{\mathrm{rel}}(v, w) = \displaystyle{\frac{||v - w||_{L^{\infty}(0, T) } }{||w||_{L^{\infty}(0, T) } }}, 
\quad 
\bar{\delta}_{\mathrm{rel}}(\bu) = \displaystyle{\frac{||\bu^{l} - \bu^{l-1}||_{L^{\infty}(0, T; \bold{L}^{\infty}(\Omega) ) } }{||\bu^{0}||_{L^{\infty}(0, T; \bold{L}^{\infty}(\Omega) ) } } }.
\label{eq:ecl_all_functionals}
\end{equation*}

\subsection{Numerical convergence analysis}
The monolithic HMM and the WR HMM algorithms have been implemented in the open source 
software GetDP \cite{dular1998general} using a finite element formulation.
Two cases are  considered for the numerical validation of the method:
(a) the case with 1 time window and the same time stepping at the macroscale and the mesoscale;
(b) the case with 1 time window and different time stepping at the macroscale and the mesoscale.

\begin{figure}
    \begin{tikzpicture}
	\begin{axis}[width=0.5\textwidth,height=6cm,xmin=0,xmax=4e-5,ymin=0,ymax=6,xlabel={Time (s)},ylabel={Eddy current losses (W)},y label style={at={(axis description cs:0.07,0.5)}}]
	   \addplot+[color=black,only marks,mark=*,mark size=0.7,mark options=solid] table [x=a, y=b, col sep=space] {results/result0.dat};
	   \addplot+[color=magenta,thick,mark=none,mark options={solid},smooth] table [x=a, y=b, col sep=space]      {results/result1.dat};
	   \addplot+[color=blue, dashed, thick,mark=none,mark options={solid},smooth] table [x=a, y=b, col sep=space]      {results/result2.dat};
	   \addplot+[color=teal,thick,mark=none,mark options={solid},smooth] table [x=a, y=b, col sep=space]        {results/result3.dat};
	   	   \legend{Ref, Mono, WR $l=1$, WR $l=2$}
	\end{axis}
    \end{tikzpicture}
    \begin{tikzpicture}
	\begin{axis}[width=0.5\textwidth,height=6cm,xmin=0,xmax=4e-5,ymin=0,ymax=1e-1,xlabel={Time (s)},ylabel style={align=center,at={(axis description cs:0,0.5)}}, ylabel=Magnetic energy (J)\\~\\~,]
    \addplot+[color=black,only marks,mark=*,mark size=0.7,mark options=solid] table [x=a, y=b, col sep=space] {results/result4.dat};
	   \addplot+[color=magenta,thick,mark=none,mark options={solid},smooth] table [x=a, y=b, col sep=space]      {results/result5.dat};
	   \addplot+[color=blue, dashed, thick,mark=none,mark options={solid},smooth] table [x=a, y=b, col sep=space]      {results/result6.dat};
	   \addplot+[color=teal,thick,mark=none,mark options={solid},smooth] table [x=a, y=b, col sep=space]        {results/result7.dat};
	   	   \legend{Ref, Mono, WR $l=1$, WR $l=2$}
	\end{axis}
    \end{tikzpicture}
    \caption{Instantaneous eddy current losses (left) and magnetic power (right) for the reference, 
    the monolithic and the WR approaches. An overall mesh of 8722 elements with 25 elements for 
    the homogenized domain and a time step $\Delta t = 2\cdot10^{-7}$s were used. Only results 
    for the first two WR iterations are shown.}
    \label{fig:wr_joule_losses}
\end{figure}
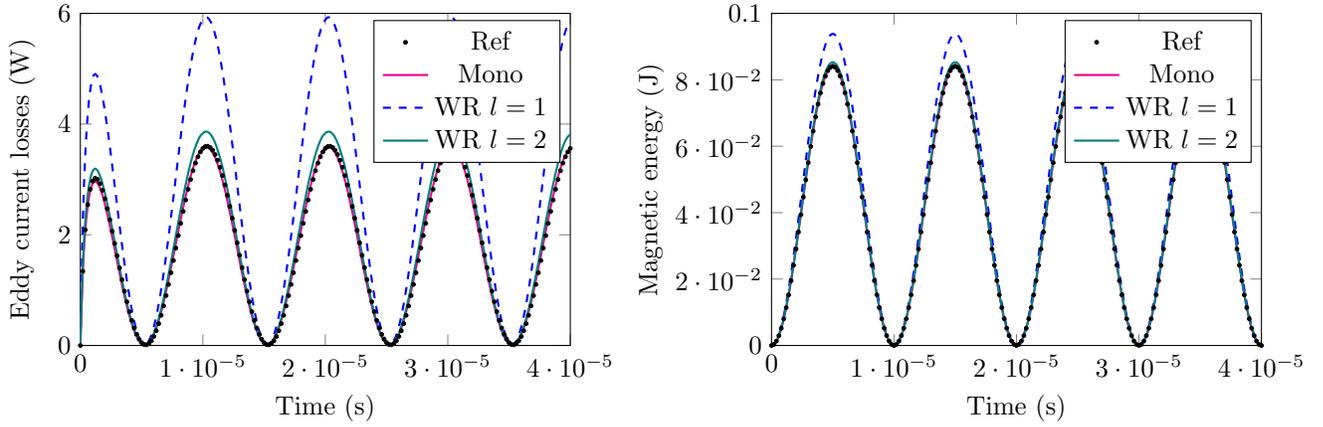
\begin{table}
\caption{Convergence of the eddy current losses and the magnetic energy as a function of 
the WR iterations. The relative errors $\mathrm{Err}_{\mathrm{\tau \mathrm{P}}}$ and 
$\mathrm{Err}_{\mathrm{\mathrm{Wmag}}}$
between the reference and the monolithic approaches are $2.12 \cdot 10^{-2}$ and $1.24 \cdot 10^{-2}$, respectively.}
\label{tab:convergence_edl}
\medskip\centering
\begin{tabular}{| c | c | c | c | c |}
\hline
WR iteration $l$   
& $\mathrm{Err}_{\mathrm{\tau \mathrm{P}}}^{l}$ 
& $\mathrm{Err}_{\mathrm{\tau \mathrm{P}}}^{l} - \mathrm{Err}_{\mathrm{\tau \mathrm{P}}}^{l-1}$
& $\mathrm{Err}_{\mathrm{\mathrm{Wmag}}}^{l}$ 
& $\mathrm{Err}_{\mathrm{\mathrm{Wmag}}}^{l} - \mathrm{Err}_{\mathrm{\mathrm{Wmag}}}^{l-1}$\\
\hline
 $1$       &  $6.68 \cdot 10^{-1}$  & $-$                    &  $1.17 \cdot 10^{-1}$ & $-$                  \\
 $2$       &  $8.08 \cdot 10^{-2}$  & $5.87 \cdot 10^{-1}$  &  $1.45 \cdot 10^{-2}$ & $1.03 \cdot 10^{-1}$\\
 $3$       &  $1.17 \cdot 10^{-2}$  & $6.90 \cdot 10^{-2}$  &  $2.08 \cdot 10^{-3}$ & $1.24 \cdot 10^{-2}$\\
 $4$       &  $1.87 \cdot 10^{-3}$  & $9.84 \cdot 10^{-3}$  &  $3.18 \cdot 10^{-4}$ & $1.77 \cdot 10^{-3}$\\
 $5$       &  $3.78 \cdot 10^{-4}$  & $1.49 \cdot 10^{-3}$  &  $5.01 \cdot 10^{-5}$ & $2.68 \cdot 10^{-4}$\\
 $6$       &  $1.41 \cdot 10^{-4}$  & $2.35 \cdot 10^{-4}$  &  $8.00 \cdot 10^{-6}$ & $4.21 \cdot 10^{-5}$\\
 $7$       &  $1.05 \cdot 10^{-4}$  & $3.80 \cdot 10^{-5}$  &  $1.30 \cdot 10^{-6}$ & $6.70 \cdot 10^{-6}$\\
 $8$       &  $9.0 \cdot 10^{-5}$   & $6.00 \cdot 10^{-6}$  &  $2.00 \cdot 10^{-6}$ & $1.10 \cdot 10^{-6}$\\
 $9$       &  $9.8 \cdot 10^{-5}$   & $ < 10^{-6}$           &  $ < 10^{-6}$          & $ < 10^{-6}$         \\
 $10$      &  $9.8 \cdot 10^{-5}$   & $ < 10^{-6}$           &  $ < 10^{-6}$          & $ < 10^{-7}$         \\
\hline
\end{tabular}
\end{table}
For case (a), an excellent agreement is obtained between the WR solutions and
the monolithic solutions to which the monolithic solutions are expected to
converge for the eddy current losses (Figure \ref{fig:wr_joule_losses} - left)
and the magnetic energy (Figure \ref{fig:wr_joule_losses} - right).  
Table \ref{tab:convergence_edl} depicts the evolution of the relative $L^{\infty}$
error defined in \eqref{eq:error-jl-mage} on eddy currents losses and the
magnetic energy between the WR and the monolithic approach.  As can be seen from
this table, the error between the WR and the monolithic cases is smaller than
the error between the monolithic and the reference cases after the third WR
iteration.  These findings were verified for all the numerical tests that were
run.
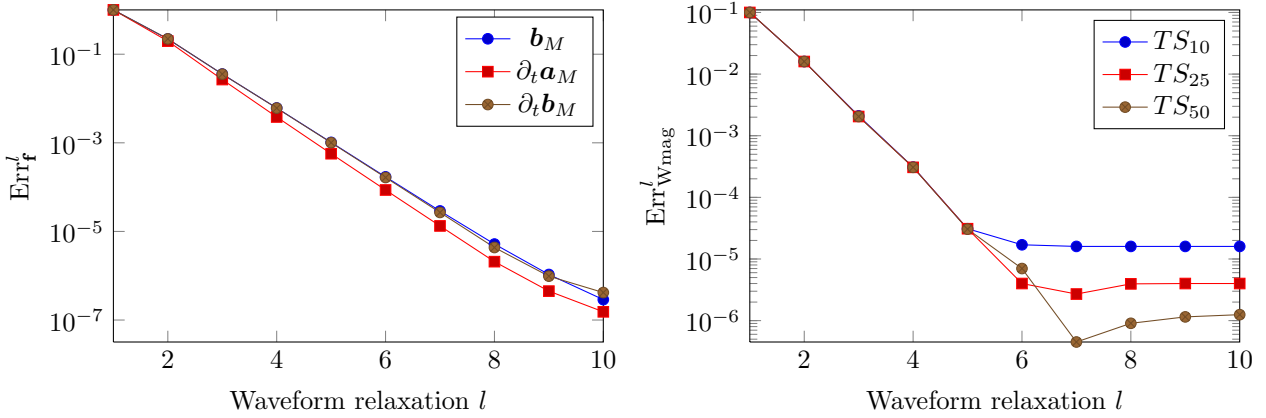
\begin{figure}[ht!]
    \begin{tikzpicture}
	\begin{semilogyaxis}[width=0.5\textwidth,height=6cm,xmin=1,xmax=10,ymax=1.0,xlabel={Waveform relaxation $l$},ylabel={Err$_{\mathrm{\bold{f}}}^l$} ]					
	   \addplot table [x=a, y=b, col sep=space] {results/result8.dat};
	   \addplot table [x=a, y=b, col sep=space] {results/result9.dat};
	   \addplot table [x=a, y=b, col sep=space] {results/result10.dat};
	   \legend{$\bb_M$, $\partial_t \ba_M$, $\partial_t \bb_M$}
	\end{semilogyaxis}
    \end{tikzpicture}
    \begin{tikzpicture}
	\begin{semilogyaxis}[width=0.5\textwidth,height=6cm,legend pos=north east,xmin=1,xmax=10,ymin=4.5e-7,ymax=1.1e-1,xlabel={Waveform relaxation $l$},ylabel={$\mathrm{Err}_{\mathrm{\mathrm{Wmag}}}^{l}$} ]					
	   \addplot table [x=a, y=b, col sep=space] {results/result11.dat};
	   \addplot table [x=a, y=b, col sep=space] {results/result12.dat};
	   \addplot table [x=a, y=b, col sep=space] {results/result13.dat};
	   \legend{$TS_{10}$, $TS_{25}$, $TS_{50}$}	
	 \end{semilogyaxis}
    \end{tikzpicture}    
    \caption{Left: Convergence of the macroscale waveforms for successive WR iterations 
    (case with 1 time window and the same time discretization at the macroscale and the mesoscale).
    Right: Relative error on magnetic energy as a function of the WR iterations. The macroscale time grid comprise 
    10, 25 and 50 time steps per period, respectively whereas the time grid for the mesoscale problems
    comprise 50 time steps per period. In both cases, a macroscale mesh with 8722 elements with 25 elements for 
    the homogenized domain was used. The relative error between the reference and the monolithic plots 
    is 0.01243.}
    \label{fig:wr_1tw_conv_field}
\end{figure}
Figure \ref{fig:wr_1tw_conv_field} - left. Left} depicts the evolution of the relative $L^{\infty}$ 
error on the electromagnetic fields defined in \eqref{eq:error-fields} as a function of the WR iterations.
This criterion is to be used to control the error in case the monolithic solution
is not available.
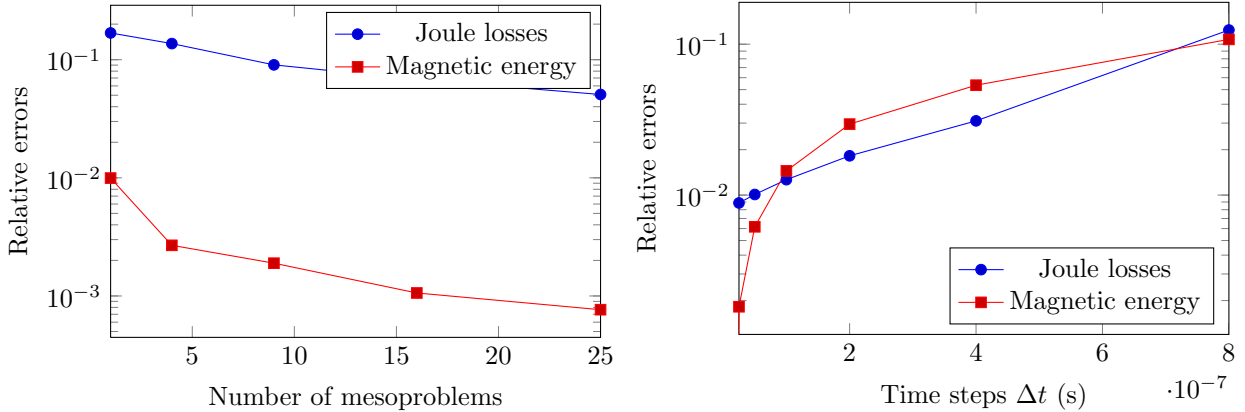
\begin{figure}
    \begin{tikzpicture}
	\begin{semilogyaxis}[width=0.5\textwidth,height=6cm,xmin=1,xmax=25,xlabel={Number of mesoproblems},ylabel={Relative errors} ]					
	   \addplot table [x=a, y=c, col sep=space] {results/result14.dat};
	   \addplot table [x=a, y=c, col sep=space] {results/result15.dat};
	   \legend{Joule losses, Magnetic energy}
	\end{semilogyaxis}
    \end{tikzpicture}
    \begin{tikzpicture}
	\begin{semilogyaxis}[width=0.5\textwidth,height=6cm,xmin=2.5e-8,xmax=8e-7,legend pos=south east,xlabel={Time steps $\Delta t$ (s)},ylabel={Relative errors} ]					
	   \addplot table [x=b, y=c, col sep=space] {results/result16.dat};
	   \addplot table [x=b, y=c, col sep=space] {results/result17.dat};
	   \legend{Joule losses, Magnetic energy}
	\end{semilogyaxis}
    \end{tikzpicture}    
    \caption{Relative errors on eddy current losses and magnetic energy between the waveform 
    relaxation and the reference cases. Left: Relative errors as a function of the number of mesoproblems
    (1 to 25 mesoproblems are considered for the homogenized domain).
    Right: Relative errors as a function of the time discretization. 
    The time step ranges from $2.5\cdot10^{-8}$s to $8\cdot10^{-7}$s.}
    \label{fig:wr_space_time_meshes_conv_field}
\end{figure}

Also for case (b) a good agreement is obtained between the waveform relaxation
solutions and the monolithic solutions.  
Figure \ref{fig:wr_1tw_conv_field} - right 
illustrates the case with different
macroscale time discretization (10, 25 and 50 time steps per period) but with
the same time discretization at the mesoscale (50 time steps per period). In
particular, a good agreement is observed for values of the error up to
$10^{-4}$. Beyond this value, smaller errors are even obtained for finer
macroscale time grids.  This underlines the possibility for efficient and
consistent usage of multirate time stepping

Figure \ref{fig:wr_space_time_meshes_conv_field} depicts the convergence of the 
eddy current losses and the magnetic field to the reference solutions when the 
spatial grid and time grid are refined. As can be seen in Figure 
\ref{fig:wr_space_time_meshes_conv_field} - left, the relative errors decrease 
as the number of mesoproblems is increased from 1 to 25. 
One Gauss point was considered for each element and therefore the 
macroscale mesh for the homogenized domain contains the same number of elements. 
Figure \ref{fig:wr_space_time_meshes_conv_field} - right depicts the same evolution 
for the case for different time discretizations. In this case, a linear convergence 
is observed for the eddy current losses (with the time derivative) whereas the 
curve for magnetic energy exhibits a faster convergence.
A good agreement was also observed in the case of many time windows.

An empirical comparison of the computational cost between the two methods can be
made on the basis of the formula \eqref{eq:comparison-mono-wr-2}.  In our
numerical computations, we have always found that the errors on eddy currents
losses and on the magnetic energy ($\mathrm{Err}_{\tau \mathrm{P}}^{l}$ and
$\mathrm{Err}_{\mathrm{Wmag}}^{l}$) become smaller than the errors between
monolithic and the reference quantities already at the third waveform relaxation
iteration (see Table \ref{tab:convergence_edl}).  The first iteration is not
computationally costly as it involves the initialization of the mesoscale
solution to zero. Therefore, it is reasonable to assume that the errors of both
methods become comparable for $N_{\mathrm{WR}} = 2$.  
Neglecting the mesoscale costs related to the reading and the updating of the homogenized constitutive
law (i.e.: $\kappa \rightarrow 0$) and considering
$N_{\mathrm{NR}}^{\mathrm{m}} = 3$ for a residue of $10^{-6}$ 
so that $N_{\mathrm{NR}}^{\mathrm{m}} = N_{\mathrm{NR}}^{\mathrm{M}}$, then $N_{\mathrm{WR}} < N_{\mathrm{NR}}^{\mathrm{M}} N^\mathrm{m}_\mathrm{dim}$ 
with $N_{\mathrm{WR}} = 2$, $N_{\mathrm{NR}}^{\mathrm{M}} = 3$, $N^\mathrm{m}_\mathrm{dim} = 4$ 
for a three-dimensional problem (resp. $N^\mathrm{m}_\mathrm{dim} = 3$ for a two-dimensional problem)
and a theoretical speed-up of 4.5 for two-dimensional problems (resp. 6 for three-dimensional problems)
can be gained. However, the current proof-of-concept implementation only allows a speed up of 2. 

\section{Conclusion}

In this paper the heterogeneous multiscale method was combined with the waveform
relaxation method. An efficient algorithm exploiting exact Jacobian information
based on Schur-complements was proposed. Estimates have shown that an optimal
implementation of the algorithm can be expected to be up to 6 times faster than a
comparably monolithic approach. In the case of multirate behavior, 
even higher speed-up are expected. Convergence and efficiency have been numerically
investigated using a challenging test example. Finally, optimization of our
implementation and applying the available convergence analysis of waveform
relaxation for higher index differential algebraic systems is subject of future
research.

\section*{Acknowledgment}
This work was supported by the German Funding Agency (DFG) by the grant 
`Parallel and Explicit Methods for the Eddy Current Problem' (SCHO-1562/1-1), 
the `Excellence Initiative' of the German Federal and State Governments and the
Graduate School CE at Technische Universit\"at Darmstadt and the Belgian Science
Policy under grant IAP P7/02 (Multiscale modelling of electrical energy system).

\end{document}